\tikzstyle{decision} = [diamond, draw, fill=blue!20, 
\tikzstyle{block} = [rectangle, draw, fill=blue!20, 
\tikzstyle{line} = [draw, -latex']
\tikzstyle{cloud} = [draw, ellipse,fill=red!20, node distance=3cm,
\tikzset{main node/.style={circle,fill=blue!20,draw,minimum size=1cm,inner sep=0pt},  }
\newcommand{\argmin}{\text{arg}\min}
\newcommand{\argmax}{\text{arg}\max}
\begin{document}
\title[]{Controlling conservation laws II: compressible Navier--Stokes equations} 
\author[Li]{Wuchen Li}
\email{wuchen@mailbox.sc.edu}
\address{Department of Mathematics, University of South Carolina, Columbia}
\author[Liu]{Siting Liu}
\email{siting6@ucla.edu}
\address{Department of Mathematics, University of California, Los Angeles}

\author[Osher]{Stanley Osher}
\email{sjo@math.ucla.edu}
\address{Department of Mathematics, University of California, Los Angeles}
\newcommand{\vr}{\overrightarrow}
\newcommand{\wt}{\widetilde}
\newcommand{\dd}{\mathcal{\dagger}}
\newcommand{\ts}{\mathsf{T}}
\keywords{Navier--Stokes equations; Entropy--entropy flux--metric; Fisher information; Optimal control; Primal--dual algorithm; Lax–-Friedrichs scheme.}
\thanks{W. Li thanks the start-up funding from the University of South Carolina and NSF RTG: 2038080. In addition, W. Li, S. Liu and S. Osher thank the funding from AFOSR MURI FA9550-18-1-0502 and ONR grants: N00014-18-1-2527, N00014-20-1-2093, and N00014-20-1-2787.}
\begin{abstract}
We propose, study, and compute solutions to a class of optimal control  problems for hyperbolic systems of conservation laws and their viscous regularization \cite{Lax}. We take barotropic compressible Navier--Stokes equations (BNS) as a canonical example. We first apply the entropy--entropy flux--metric condition for BNS. We select an entropy function and rewrite BNS to a summation of flux and metric gradient of entropy. We then develop a metric variational problem for BNS, whose critical points form a primal-dual BNS system. We design a finite difference scheme for the variational system. The numerical approximations of conservation laws are implicit in time. We solve the variational problem with an algorithm inspired by the primal--dual hybrid gradient method. This includes a new method for solving implicit time approximations for conservation laws, which seems to be unconditionally stable. {\color{black}Several numerical examples are presented to demonstrate the effectiveness of the proposed algorithm.} 
\end{abstract}
\maketitle
\section{Introduction}
Nonlinear systems of conservation laws \cite{Evans,LF} play essential roles in physics, modeling, engineering, and scientific computing with potential applications in AI (Artificial intelligence) and Bayesian sampling problems. A canonical example of systems of conservation laws is the compressible Navier--Stokes equations \cite{BDKL}. They describe the fluid flow using physical laws, such as conservation of mass, momentum and energy. The system also contains a viscosity term, which describes thermodynamics' dissipative nature. Solving compressible Navier--Stokes equations and their simplifications are fundamental problems in computational fluid dynamics.   

In this paper, we propose a class of optimal control problems for systems of conservation laws following \cite{LLO}. We select the barotropic compressible Navier--Stokes equation (BNS) as an example. We first apply the entropy--entropy flux--metric condition for BNS. We then select an entropy function and rewrite BNS into the summation of flux and metric gradient of entropy. We call this formulation ``flux-gradient flow'' in BNS metric space. We use the flux-gradient flow formulation to design a metric variation problem and derive its critical point system, i.e., the primal--dual BNS system. We demonstrate that the primal-dual BNS system is useful in modeling and computation. {\color{black} More importantly, we apply a primal-dual hybrid gradient method and Lax–Friedrichs type schemes to compute the primal--dual BNS system. It includes a simple-to-implement  method for solving implicit time approximations for conservation laws, which seem to be unconditionally stable. We present several numerical examples to demonstrate the effectiveness of the method.}

The main result is sketched below. Denote $\Omega$ as a one dimensional torus, and define $\mathcal{F}$, $\mathcal{G}$ as smooth functionals. Consider a variational problem for BNS: 
\begin{equation*}
\inf_{\rho, m, a, \rho_1, m_1}~\int_0^1\Big[\int_\Omega \frac{1}{2}|a(t,x)|^2\mu(\rho(t,x)) dx-\mathcal{F}(\rho, m)(t)\Big]dt+\mathcal{H}(\rho_1,m_1),
\end{equation*}
where the infimum is taken among variables $\rho\colon [0,1]\times \Omega\rightarrow\mathbb{R}_+$, $m\colon [0,1]\times\Omega\rightarrow\mathbb{R}$, $a\colon [0,1]\times \Omega\rightarrow\mathbb{R}$, and $\rho_1\colon\Omega\rightarrow\mathbb{R}_+$, $m_1\colon\Omega\rightarrow\mathbb{R}$ satisfying 
\begin{equation*}
\left\{\begin{aligned}
&\partial_t\rho+\partial_x m=0,\\
&\partial_t m+\partial_x( \frac{m^2}{\rho})+\partial_xP(\rho)+\partial_x(\mu(\rho)a)=\beta\partial_x(\mu(\rho)\partial_x\frac{m}{\rho}),
\end{aligned}\right.
\end{equation*}
with given initial time value conditions $\rho(0,x)=\rho_0(x)$, $m(0,x)=m_0(x)$. Here we assume $P(\rho)=\rho^\gamma$, $\mu(\rho)=\rho^\alpha$, $\gamma, \alpha \in\mathbb{R}$. The critical point system of the above variational problem is described below. Denote $\phi$, $\psi\colon [0, 1]\times \Omega\rightarrow\mathbb{R}$. Then $a(t,x)=\partial_x\psi(t,x)$, and
\begin{equation*}\left\{\begin{aligned}
&\partial_t\rho+\partial_x m=0,\\
&\partial_t m+\partial_x( \frac{m^2}{\rho})+\partial_xP(\rho)+\partial_x(\mu(\rho)\partial_x\psi)=\beta\partial_x(\mu(\rho)\partial_x\frac{m}{\rho}),\\
&\partial_t\phi+\frac{1}{2}|\partial_x\psi|^2\mu'(\rho)-(\frac{m^2}{\rho^2}, \partial_x\psi)+(P'(\rho), \partial_x\psi)+\frac{\delta}{\delta\rho}\mathcal{F}(\rho, m)\\
&\hspace{3cm}=\beta(\partial_x\psi,\partial_x\frac{m}{\rho})\mu'(\rho)+\beta\frac{m}{\rho^2}\partial_x(\mu(\rho)\partial_x\psi),\\
&\partial_t\psi+2\partial_x\psi\cdot\frac{m}{\rho}+\partial_x\phi+\frac{\delta}{\delta m}\mathcal{F}(\rho, m)=-\beta\frac{1}{\rho}\partial_x(\mu(\rho)\partial_x\psi).
\end{aligned}\right.
\end{equation*}
Here functions $\phi$, $\psi$ have boundary conditions at the terminal time $t=1$. We call the above system the primal-dual BNS system. Clearly, if we select $\mathcal{F}=\mathcal{H}=0$, then we minimize a quadratic running cost in term of $a^2$, in which $a=0$ is a critical point solution. The primal--dual BNS system forms the initial value problem of BNS equation.

In the literature, optimal control problems in density space are widely considered in optimal transport \cite{AGS, BB,C1,O1,otto2001,Villani2009_optimal}, mean--field games \cite{MFGC,MRPP,LL},  
and Schr{\"o}dinger bridge problems \cite{BCGL,CGP,LeL}. These control problem are often studied on a scalar density function. We extend current studies in modeling systems of conservation laws,  where we study the dynamics of the density and its momentum as a system. We also remark that the entropy--entropy flux--metric condition is closely related to the energetic variational approach in the literature \cite{GZW, CL,CLW}. In this paper, we choose both entropy (Lyapunov) functionals and optimal transport type metrics from the flux function. Under this selection, we design a class of optimal control problems for systems of conservation laws, from which we derive primal--dual systems of conservation laws and design implicit variational schemes.

The paper is organized as follows. In section \ref{section2}, we briefly review the conservation laws with entropy--entropy flux--conditions. We further design control problems for flux--gradient flows. In section \ref{section3}, we apply this approach to control barotropic compressible Navier--Stokes equations and derive their primal-dual PDE systems. {\color{black}In section \ref{section4}, we formulate primal-dual hybrid gradient like algorithms to solve the BNS system numerically.  Several numerical examples are presented.}
\section{Conservation law and entropy-entropy flux-metric}\label{section2}
In this section, we present the entropy--entropy flux--metric condition for regularized systems of conservation laws \cite{LLO}. Following this condition, we define a class of metric operators for systems of conservation laws, and then design flux-mean-field control problems. 

\subsection{Entropy--entropy flux--metric}
For simplicity of presentation, we consider a one dimensional periodic spatial domain. I.e., $\Omega=\mathbb{T}^1$.  
Consider a system of $N$ partial differential equations  
\begin{equation}\label{cld}
\partial_t u_i(t,x)+\partial_x f_i(u(t,x))=\beta \sum_{j=1}^N\partial_x(A_{ij}(u(t,x))\partial_xu_j(t,x)),
\end{equation}
where $u=(u_1,\cdots, u_N)$ is a vector function with $u_i\colon \mathbb{R}_+\times\Omega\rightarrow\mathbb{R}^1$, $i=1,\cdots, N$, 
$f=(f_1, \cdots f_N)$ is a flux vector function with $f_i\colon \mathbb{R}^N\rightarrow\mathbb{R}^1$, $i=1\cdots, N$, and $A=(A_{ij})_{1\leq i,j\leq N}\in\mathbb{R}^{N\times N}$ is a semi-positive definite matrix function with $A_{ij}\colon \mathbb{R}^{N}\rightarrow \mathbb{R}^1$, $i,j=1,\cdots, N$.

We next define a metric space for the unknown vector function $u$. Here the metric is constructed by both entropy-entropy flux condition and the nonlinear diffusion operator.  
\begin{definition}[Entropy--entropy flux--metric condition]\label{def2}
We call $(G,\Psi, C)$ an entropy-entropy flu-metric condition for equation \eqref{cld} if there exists a convex function $G\colon \mathbb{R}^N\rightarrow\mathbb{R}$, and $\Psi\colon \mathbb{R}^N\rightarrow\mathbb{R}$, such that
\begin{equation*}
\frac{\partial}{\partial u_i}\Psi(u)=\sum_{j=1}^N\frac{\partial}{\partial u_j}G(u) \frac{\partial}{\partial u_i}f_j(u),
\end{equation*}
and there exists a symmetric semi-positive matrix function $C\colon \mathbb{R}^N\rightarrow\mathbb{R}^{N\times N}$, such that 
\begin{equation*}
C(u)\nabla^2_{uu}G(u)=A(u).
\end{equation*}
In other words, denote $C=(C_{ij})_{1\leq i,j\leq N}$, such that 
\begin{equation*}
\sum_{j=1}^NC_{ij}(u)\frac{\partial^2}{\partial u_j\partial u_k}G(u)=A_{ik}(u). 
\end{equation*}
We require that $C_{ij}=C_{ji}$ and $C\succeq 0$. Here we call $G$ the {\em entropy element}, $\Psi$ the {\em entropy flux} and $C$ the {\em metric element}. 
\end{definition}
\begin{remark}[Symmetry conditions]
The entropy--entropy flux--metric condition is to require the following symmetric conditions on the regularized conservation law \eqref{cld}. Assume that $G$ is strictly convex. For any $i,k=1,\cdots, N$, 
\begin{itemize}
\item[(i)]
\begin{equation*}
\sum_{j=1}^N\frac{\partial^2}{\partial u_j\partial u_k}G(u) \frac{\partial}{\partial u_i}f_j(u)=\sum_{j=1}^N\frac{\partial^2}{\partial u_j\partial u_i}G(u) \frac{\partial}{\partial u_k}f_j(u);
\end{equation*}
\item[(ii)] 
\begin{equation*}
\Big(A(u)(\nabla^2_{uu}G(u))^{-1}\Big)_{ik}=\Big(A(u)(\nabla^2_{uu}G(u))^{-1}\Big)_{ki},
\end{equation*}
and 
\begin{equation*}
A(u)(\nabla^2_{uu}G(u))^{-1}\succeq 0. 
\end{equation*}
\end{itemize}
We comment that condition (i) follows from the fact that $\frac{\partial^2}{\partial u_i\partial u_k}\Psi(u)=\frac{\partial^2}{\partial u_k\partial u_i}\Psi(u)$, as discussed in Friedrichs-Lax's paper \cite{LF}. Condition (ii) guarantees the existence of generalized optimal transport type metric and generalized Fisher information functional.
\end{remark}

\subsection{Metrics and flux--gradient flows}
From the entropy-entropy flux--metric condition, we introduce the metric space for variable $u$.  
Define the space of functions $u$ as
 \begin{equation*}
 \mathcal{M}=\Big\{u=(u_1,\cdots, u_N)\in C^{\infty}(\Omega)^N\colon \int_\Omega u_i(x)dx=\mathrm{constant},\quad \textrm{for $i=1,\cdots,N$}\Big\}.
 \end{equation*}
Denote the tangent space of $\mathcal{M}(u)$ at point $u$ as
\begin{equation*}
T_u\mathcal{M}=\Big\{\sigma=(\sigma_1,\cdots, \sigma_N)\in C^{\infty}(\Omega)^N\colon \int_\Omega \sigma_i(x)dx=0, \quad \textrm{for $i=1,\cdots,N$}\Big\}.
\end{equation*}
We define a metric operator on the vector function space $\mathcal{M}$. Here we shall use the metric element $C(u)$. 
\begin{definition}[Metric]\label{metrics}
  Define the inner product $\g\colon\mathcal{M}\times
  {T_u}\mathcal{M}\times{T_u}\mathcal{M}\rightarrow\mathbb{R}$ below. 
 \begin{equation*}\begin{split}
    \g(u)(\sigma, \hat\sigma)  =&\sum_{i,j=1}^N\int_\Omega (\partial_x\phi_i(x), \partial_x\tilde\phi_j(x))C_{ij}(u)dx,
  \end{split}
\end{equation*}
where vector functions $\phi=(\phi_1,\cdots,\phi_N)$, $\tilde\phi=(\tilde\phi_1,\cdots,\tilde\phi_N)\in C^{\infty}(\Omega)^N$ satisfy
\begin{equation*}
\sigma_i=-\sum_{j=1}^N\partial_x(C_{ij}(u)\partial_x\phi_j), \qquad \tilde\sigma_i=-\sum_{j=1}^N\partial_x(C_{ij}(u)\partial_x\tilde\phi_j), 
\end{equation*}
for $i=1,\cdots, N$.
\end{definition}

In this metric space $(\mathcal{M}, \g)$, we notice that the dissipative operator of PDE \eqref{cld} forms the gradient descent flow of the entropy functional. We denote the entropy functional as
\begin{equation*}
\mathcal{G}(u)=\int_\Omega G(u)dx.
\end{equation*}
\begin{proposition}[Gradient flow]\label{gd}
The gradient descent flow of functional $\mathcal{G}(u)$ in $(\mathcal{M}, \g)$ satisfies
\begin{equation*}
\begin{split}
\partial_t u_i=&\sum_{j=1}^N\partial_x\Big(C_{ij}(u)\partial_x\frac{\partial}{\partial u_j}G(u)\Big)
=\sum_{j=1}^N\partial_x(A_{ij}(u)\partial_xu_j).
\end{split}
\end{equation*}
\end{proposition}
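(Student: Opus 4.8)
The plan is to follow the standard Otto-calculus recipe for metric gradient flows: the Riemannian gradient $\mathrm{grad}_{\g}\mathcal{G}(u)$ is the unique tangent vector characterized by $\g(u)(\mathrm{grad}_{\g}\mathcal{G}(u),\hat\sigma)=d\mathcal{G}(u)\cdot\hat\sigma$ for every $\hat\sigma\in T_u\mathcal{M}$, and the gradient descent flow is $\partial_t u=-\mathrm{grad}_{\g}\mathcal{G}(u)$. The entire statement then reduces to computing both sides of this defining identity and reading off $\partial_t u$.

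First I would represent the velocity $\partial_t u=\sigma$ by its potential $\Phi=(\Phi_1,\dots,\Phi_N)$ through the elliptic relation $\sigma_i=-\sum_{j}\partial_x(C_{ij}(u)\partial_x\Phi_j)$, exactly as in Definition~\ref{metrics}. Inserting $\sigma$ together with an arbitrary test tangent vector $\hat\sigma$ (with its own potential $\tilde\phi$) into the metric and integrating by parts in $x$ on the torus — using the symmetry $C_{ij}=C_{ji}$ — collapses the double sum to $\g(u)(\sigma,\hat\sigma)=\sum_{i}\int_\Omega \Phi_i\,\hat\sigma_i\,dx$. On the other hand the differential of the entropy is $d\mathcal{G}(u)\cdot\hat\sigma=\int_\Omega\sum_{i}\frac{\partial}{\partial u_i}G(u)\,\hat\sigma_i\,dx$. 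Matching $-d\mathcal{G}(u)$ against the metric representation for all admissible $\hat\sigma$ forces $\Phi_i=-\frac{\partial}{\partial u_i}G(u)$ up to an additive constant, which is immaterial since only $\partial_x\Phi_i$ enters the flow. Substituting back yields the first claimed identity $\partial_t u_i=\sum_{j}\partial_x\big(C_{ij}(u)\partial_x\frac{\partial}{\partial u_j}G(u)\big)$.

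For the second equality I would apply the chain rule $\partial_x\frac{\partial}{\partial u_j}G(u)=\sum_{k}\frac{\partial^2}{\partial u_j\partial u_k}G(u)\,\partial_x u_k$, interchange the order of summation, and invoke the metric condition $\sum_{j}C_{ij}\frac{\partial^2}{\partial u_j\partial u_k}G=A_{ik}$ from Definition~\ref{def2}. This immediately rewrites $\sum_{j}\partial_x(C_{ij}\partial_x\frac{\partial}{\partial u_j}G)$ as $\sum_{k}\partial_x(A_{ik}(u)\partial_x u_k)$, i.e.\ precisely the dissipative part of \eqref{cld}. As a consistency check, the resulting flow is entropy-dissipative: $\frac{d}{dt}\mathcal{G}(u)=-\int_\Omega\sum_{i,j}\partial_x\frac{\partial}{\partial u_i}G\,C_{ij}\,\partial_x\frac{\partial}{\partial u_j}G\,dx\le 0$ by $C\succeq0$, which matches the sign convention of gradient descent.

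The integration-by-parts and chain-rule computations are short and routine; the genuine content is the analytic well-posedness of the potential representation. The map $\Phi\mapsto\big(-\sum_{j}\partial_x(C_{ij}(u)\partial_x\Phi_j)\big)_i$ must be invertible on the mean-zero space $T_u\mathcal{M}$ so that each $\sigma$ admits a potential unique modulo constants. The step I expect to be the main obstacle is therefore justifying solvability of this weighted divergence-form elliptic system, which requires $C(u)$ to be positive definite (not merely semi-definite) and sufficient regularity of $u$ and $C$. On the one-dimensional torus this is a standard elliptic solvability statement, and I would record it as a standing nondegeneracy hypothesis rather than reprove it in the argument.
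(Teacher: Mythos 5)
Your proposal is correct, and it in fact proves more than the paper does. The paper's own proof treats the first identity, $\partial_t u_i=\sum_{j}\partial_x\big(C_{ij}(u)\partial_x\frac{\partial}{\partial u_j}G(u)\big)$, as simply \emph{being} the gradient descent flow in $(\mathcal{M},\g)$ (the standard Otto-calculus form, taken as definitional), and its entire content is the second equality, obtained exactly as in your final step: the chain rule $\partial_x\frac{\partial}{\partial u_j}G(u)=\sum_{k}\frac{\partial^2}{\partial u_j\partial u_k}G(u)\,\partial_x u_k$ followed by the metric condition $\sum_{j}C_{ij}(u)\frac{\partial^2}{\partial u_j\partial u_k}G(u)=A_{ik}(u)$ from Definition~\ref{def2}. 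Your derivation of the first identity from the duality characterization $\g(u)(\mathrm{grad}_{\g}\mathcal{G}(u),\hat\sigma)=d\mathcal{G}(u)\cdot\hat\sigma$, via the potential representation and integration by parts on the torus, supplies precisely the step the paper leaves implicit; what it buys is a self-contained justification of why this expression deserves to be called the gradient flow, at the cost of the nondegeneracy caveat you rightly flag. That caveat is not vacuous here: the paper only assumes $C\succeq 0$, and the barotropic Navier--Stokes metric of Section~\ref{section3} is genuinely degenerate (the $\rho$-component of $C$ vanishes), so the inversion $\sigma\mapsto\Phi$ is only available on the range of the weighted elliptic operator, modulo its kernel --- which is exactly why the paper sidesteps this and works formally. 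Your entropy-dissipation consistency check mirrors the corollary the paper proves immediately afterwards, so nothing in your argument conflicts with the text; it is a superset of the paper's computation.
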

\begin{proof}
The proof is based on a direct computation. 
\begin{equation*}
\begin{split}
\partial_t u_i=&\sum_{j=1}^N\partial_x\Big(C_{ij}(u)\partial_x\frac{\partial}{\partial u_j}G(u)\Big)\\
=&\sum_{j=1}^N\sum_{k=1}^N\partial_x\Big(C_{ij}(u)\frac{\partial^2}{\partial u_j\partial u_k}G(u)\partial_xu_k\Big)\\
=&\sum_{k=1}^N\partial_x\Big(A_{ik}(u)\partial_xu_k\Big).
\end{split}
\end{equation*}
In the second equality, we use the fact that $\sum_{j=1}^NC_{ij}(u)\frac{\partial^2}{\partial u_j\partial u_k}G(u)=A_{ik}(u)$. 
%\end{equation*}
\end{proof}
%\subsection{Flux--gradient flows}
Under the metric space, the conservation law system \eqref{cld} has a ``flux--gradient flow'' formulation. The {\em flux--gradient flows} demonstrate the dissipation behavior of regularized systems of conservation laws with entropy-entropy flux pairs.
\begin{definition}[Flux--gradient flow]\label{def5}
Equation \eqref{cld} can be written as  
\begin{equation*}
\partial_t u_i+\partial_x f_i(u)=\beta\sum_{j=1}^N\partial_x\Big(C_{ij}(u)\partial_x\frac{\delta}{\delta u_j}\mathcal{G}(u)\Big),
\end{equation*}
where 
\begin{equation*}
\sum_{i=1}^N\int_\Omega f_i(u)\cdot\partial_x\frac{\delta}{\delta u_i(x)}\mathcal{G}(u)dx=0. 
\end{equation*}
We denote the above formulation of equation \eqref{cld} as the {\em flux--gradient flow} in $(\mathcal{M}, \g)$. 
\end{definition}
\begin{corollary}[Entropy--Entropy flux--Fisher information dissipation]
Energy functional $\mathcal{G}(u)$ is a Lyapunov functional for PDE \eqref{cld}. Suppose $u(t,x)$ is the solution of equation \eqref{cld}, then 
\begin{equation*}
\frac{d}{dt}\mathcal{G}(u(t,\cdot))=-\beta\mathcal{I}_{\mathcal{G}}(u(t,\cdot))\leq 0,
\end{equation*}
where $\mathcal{I}_{\mathcal{G}}\colon \mathcal{M}\rightarrow\mathbb{R}_+$ is the ``generalized Fisher information functional'' defined as
\begin{equation*}
\begin{split}
\mathcal{I}_{\mathcal{G}}(u)
=&\sum_{i,j=1}^N\int_\Omega \partial_x\frac{\partial}{\partial u_i}G(u)\cdot \partial_x\frac{\partial}{\partial u_j}G(u)\cdot C_{ij}(u(x))dx. 
\end{split}
\end{equation*}
\end{corollary}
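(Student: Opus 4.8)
The plan is to differentiate the entropy along the flow and to exploit the two structural facts packaged into the flux--gradient flow formulation of Definition \ref{def5}: the orthogonality of the flux to the entropy gradient, and the semi-positivity of the metric element $C$ from Definition \ref{def2}. First I would differentiate under the integral sign, using the fact that $\frac{\delta}{\delta u_i}\mathcal{G}(u)=\frac{\partial}{\partial u_i}G(u)$ since $\mathcal{G}(u)=\int_\Omega G(u)dx$, to obtain
\begin{equation*}
\frac{d}{dt}\mathcal{G}(u(t,\cdot))=\sum_{i=1}^N\int_\Omega \frac{\delta}{\delta u_i}\mathcal{G}(u)\,\partial_t u_i\,dx.
\end{equation*}
I would then substitute $\partial_t u_i=-\partial_x f_i(u)+\beta\sum_{j=1}^N\partial_x\big(C_{ij}(u)\partial_x\frac{\delta}{\delta u_j}\mathcal{G}(u)\big)$ from Definition \ref{def5}, splitting the result into a flux contribution and a diffusion contribution.

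For the flux contribution, I would integrate by parts once on the periodic torus $\Omega$, so that the boundary terms vanish, moving $\partial_x$ off $f_i$ and onto the entropy variation:
\begin{equation*}
-\sum_{i=1}^N\int_\Omega \frac{\delta}{\delta u_i}\mathcal{G}(u)\,\partial_x f_i(u)\,dx=\sum_{i=1}^N\int_\Omega f_i(u)\,\partial_x\frac{\delta}{\delta u_i}\mathcal{G}(u)\,dx=0,
\end{equation*}
where the last equality is precisely the orthogonality condition imposed in Definition \ref{def5}. This is the step that encodes the entropy--entropy flux pairing, and it is the crux of the dissipation estimate; everything else is bookkeeping.

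For the diffusion contribution, I would integrate by parts again, transferring one derivative to produce the symmetric quadratic form
\begin{equation*}
\beta\sum_{i,j=1}^N\int_\Omega \frac{\delta}{\delta u_i}\mathcal{G}(u)\,\partial_x\Big(C_{ij}(u)\partial_x\frac{\delta}{\delta u_j}\mathcal{G}(u)\Big)dx=-\beta\sum_{i,j=1}^N\int_\Omega \partial_x\frac{\partial}{\partial u_i}G(u)\cdot C_{ij}(u)\,\partial_x\frac{\partial}{\partial u_j}G(u)\,dx,
\end{equation*}
which is exactly $-\beta\mathcal{I}_{\mathcal{G}}(u)$. Non-negativity of $\mathcal{I}_{\mathcal{G}}$ then follows pointwise: writing $v_i=\partial_x\frac{\partial}{\partial u_i}G(u)$, the integrand is the quadratic form $v^{\ts}C(u)v\geq 0$ because $C\succeq 0$ by Definition \ref{def2}. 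Hence $\frac{d}{dt}\mathcal{G}=-\beta\mathcal{I}_{\mathcal{G}}\leq 0$ whenever $\beta\geq 0$, which is the asserted dissipation.

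The only genuine subtlety, rather than a true obstacle, is regularity: differentiating under the integral sign and discarding the boundary terms in both integrations by parts require $u(t,\cdot)$ to be sufficiently smooth (or at least to have enough spatial integrability) so that all manipulations are justified. I would therefore state the smoothness hypothesis explicitly and, beyond that, rely entirely on the two structural conditions already built into Definitions \ref{def2} and \ref{def5}.
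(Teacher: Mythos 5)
Your proof is correct and follows essentially the same route as the paper's: differentiate $\mathcal{G}$ along the flow, split into flux and diffusion contributions, and identify the diffusion part as $-\beta\mathcal{I}_{\mathcal{G}}$ via integration by parts together with $C\succeq 0$. The only (immaterial) difference is in how the flux term is killed: you integrate by parts and invoke the orthogonality relation packaged into Definition \ref{def5}, whereas the paper expands $\partial_x f_i(u)$ by the chain rule and uses the entropy--entropy flux condition to recognize the integrand as the total derivative $\partial_x\Psi(u)$, which integrates to zero on the torus; the two justifications coincide after a single integration by parts.
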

\begin{proof}
The proof follows from the entropy-entropy flux-metric condition and integration by parts. In detail,  
\begin{equation*}
\begin{split}
\frac{d}{dt}\mathcal{G}(u(t,\cdot))%=\int_\Omega \partial_tG(u)dx\\
=&\sum_{i=1}^N\int_\Omega \frac{\partial}{\partial u_i}G(u)\partial_t u_i dx\\
=&-\sum_{i=1}^N\int_\Omega \frac{\partial}{\partial u_i}G(u) \partial_xf_i(u)dx+\beta\sum_{i,j=1}^N\int_\Omega \frac{\partial}{\partial u_i}G(u) \partial_x\Big(C_{ij}(u)\partial_x\frac{\partial}{\partial u_j}G(u)\Big)\\
=&-\sum_{i,j=1}^N\int_\Omega\partial_x\frac{\partial}{\partial u_i}G(u) \frac{\partial}{\partial u_j}f_i(u)\partial_x u_jdx-\beta\sum_{i,j=1}^N\int_\Omega C_{ij}(u)\partial_x\frac{\partial}{\partial u_i}G(u) \partial_x\frac{\partial}{\partial u_j}G(u)dx\\
=&-\sum_{j=1}^N\int_\Omega \frac{\partial}{\partial u_j}\Psi(u)\partial_x u_jdx-\beta\sum_{i,j=1}^N\int_\Omega C_{ij}(u)\partial_x\frac{\partial}{\partial u_i}G(u) \partial_x\frac{\partial}{\partial u_j}G(u)dx\\
=&-\sum_{j=1}^N\int_\Omega \partial_x\Psi(u)dx-\beta\sum_{i,j=1}^N\int_\Omega C_{ij}(u)\partial_x\frac{\partial}{\partial u_i}G(u) \partial_x\frac{\partial}{\partial u_j}G(u)dx\\
=&-\beta\sum_{i,j=1}^N\int_\Omega C_{ij}(u)\partial_x\frac{\partial}{\partial u_i}G(u) \partial_x\frac{\partial}{\partial u_j}G(u)dx. 
\end{split}
\end{equation*}
\end{proof}
\begin{remark}
In the literature, the dissipation of entropy along diffusion equals to the negative Fisher information functional. I.e., $N=1$, $G(u)=u\log u-u$, $f=0$, $C(u)=u$. Then 
\begin{equation*}
    \partial_t\int_\Omega G(u)dx=-\int_\Omega |\partial_x\log u|^2u dx. 
    \end{equation*}
The above fact follows directly from the gradient flow formalism in optimal transport metric \cite{otto2001}. Indeed, the similar dissipation relation also holds for flux--gradient flows in a general metric space $(\mathcal{M}, \g)$. We call the functional $\mathcal{I}_{\mathcal{G}}$ ``generalized Fisher information functional''. In next section, we derive the barotropic Navier--Stokes metric and its Fisher information functional. 
\end{remark}

\subsection{Controlling flux--gradient flows}
In this subsection, we construct the optimal control problems for flux-gradient flows. This is to design an optimal control problem over flux--gradient flows in a metric space. 
\begin{definition}[Optimal control of conservation laws]
\begin{subequations}\label{MFC}
Given smooth functionals $\mathcal{F}$, $\mathcal{H}\colon \mathcal{M}\rightarrow\mathbb{R}$, consider a variational problem
\begin{equation}\label{MFC1}
\inf_{u, v, u_1}~\int_0^1\Big[\frac{1}{2}\int_\Omega \sum_{i,j=1}^NC_{ij}(u)v_iv_jdx-\mathcal{F}(u)\Big]dt+\mathcal{H}(u_1),
\end{equation}
where the infimum is taken among variables $v\colon [0,1]\times \Omega\rightarrow\mathbb{R}^N$, $u\colon [0,1]\times\Omega\rightarrow\mathbb{R}^N$, and $u_1\colon\Omega\rightarrow\mathbb{R}^N$ satisfying 
\begin{equation}\label{MFC2}
\begin{split}
\partial_t u_i+\partial_x f_i(u)+\sum_{j=1}^N\partial_x(C_{ij}(u)v_j)=\beta \sum_{j=1}^N\partial_x(A_{ij}(u)\partial_xu_j),\quad u(0,x)=u^0(x).
\end{split}
\end{equation}
Here $u^0\colon\Omega\rightarrow \mathbb{R}^N$ is a fixed initial value vector function. 
\end{subequations}
\end{definition}

We next derive critical point systems of variational problem \eqref{MFC}. They are Hamiltonian flows in $(\mathcal{M}, \g)$ associated with regularized conservation laws.   
\begin{proposition}[Hamiltonian flows of conservation laws]\label{MFH}
A critical point system of variational problem \eqref{MFC} is given below. There exists a vector function $\phi\colon [0,1]\times\Omega\rightarrow\mathbb{R}^N$, such that 
\begin{equation*}
v_i(t,x)=\partial_x\phi_i(t,x),
\end{equation*}
and
\begin{equation}\label{MFCeq}
\left\{\begin{aligned}
&\partial_t u_i+\partial_x f_i(u)+\sum_{j=1}^N\partial_x(C_{ij}(u)\partial_x\phi_j)=\beta \sum_{j=1}^N\partial_x(A_{ij}(u)\partial_xu_j),\\
&\partial_t\phi_i+\sum_{k=1}^N\partial_x\phi_k \frac{\partial}{\partial u_i}f_k(u)+\frac{1}{2}\sum_{j,k=1}^N\partial_x\phi_j\partial_x\phi_k\frac{\partial}{\partial u_i}C_{jk}(u)+\frac{\delta}{\delta u_i}\mathcal{F}(u)\\
&\hspace{2cm}=-\beta \sum_{j=1}^N\partial_x(A_{ji}(u)\partial_x\phi_j)+\beta\sum_{j,k=1}^N\partial_x\phi_j\partial_xu_k\frac{\partial}{\partial u_i}A_{jk}(u).
\end{aligned}\right.
\end{equation}
Here initial and terminal time conditions satisfy
\begin{equation*}
u_i(0,x)=u_i^0(x), \quad \frac{\delta}{\delta u_i^1}\mathcal{H}(u^1)+\phi_i(1,x)=0,\quad i=1,\cdots, N. 
\end{equation*} 
\end{proposition}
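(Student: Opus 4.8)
The plan is to treat \eqref{MFC} as a constrained minimization problem and to derive its Euler--Lagrange (first-order optimality) conditions by the method of Lagrange multipliers. I would introduce a multiplier $\phi=(\phi_1,\dots,\phi_N)\colon[0,1]\times\Omega\to\mathbb{R}^N$ for the constraint \eqref{MFC2} and form the Lagrangian
\[
\mathcal{L}(u,v,\phi,u_1)=\int_0^1\Big[\frac{1}{2}\int_\Omega\sum_{i,j=1}^N C_{ij}(u)v_iv_j\,dx-\mathcal{F}(u)\Big]dt+\mathcal{H}(u_1)+\int_0^1\!\!\int_\Omega\sum_{i=1}^N\phi_i R_i\,dx\,dt,
\]
where $R_i=\partial_t u_i+\partial_x f_i(u)+\sum_{j}\partial_x(C_{ij}(u)v_j)-\beta\sum_{j}\partial_x(A_{ij}(u)\partial_x u_j)$ is the residual of \eqref{MFC2}. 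Critical points of \eqref{MFC} are the stationary points of $\mathcal{L}$ in the variables $\phi$, $v$, $u$ and $u_1$. Varying in $\phi_i$ returns $R_i=0$, i.e. the constraint, which is the first line of \eqref{MFCeq}.

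I would next vary in $v$. Since $v$ enters only through the running cost and the term $\sum_i\phi_i\partial_x(C_{ij}(u)v_j)$, integrating the latter by parts in $x$ on the torus $\Omega$ (so that no boundary terms appear) and using the symmetry $C_{ij}=C_{ji}$, the stationarity condition in $v_k$ becomes $\sum_j C_{kj}(u)\big(v_j-\partial_x\phi_j\big)=0$. Because $C\succeq 0$, the natural critical point is $v_i=\partial_x\phi_i$; I substitute this relation into all remaining computations.

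The heart of the proof is the variation in $u$. I would differentiate each ingredient of $\mathcal{L}$ in $u_i$ and integrate by parts in $x$ (and, for the time derivative, in $t$) to isolate the coefficient of $\delta u_i$. The time term $\phi_i\partial_t u_i$ yields $-\partial_t\phi_i$ together with a boundary contribution at the endpoints; the flux term $\phi_i\partial_x f_i$ produces $-\sum_k\partial_x\phi_k\frac{\partial}{\partial u_i}f_k(u)$; the running cost together with the metric part of the constraint combine to give the quadratic term $\frac{1}{2}\sum_{j,k}\partial_x\phi_j\partial_x\phi_k\frac{\partial}{\partial u_i}C_{jk}(u)$; and $-\mathcal{F}(u)$ gives $-\frac{\delta}{\delta u_i}\mathcal{F}(u)$. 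Setting the total coefficient to zero and rearranging (collecting $\partial_t\phi_i$ on the left and the $\beta$-terms on the right) reproduces the second line of \eqref{MFCeq}. The step I expect to be most delicate is the variation of the viscous operator $-\beta\sum_j\phi_i\partial_x(A_{ij}(u)\partial_x u_j)$: here $u$ appears both inside $A_{ij}(u)$ and in $\partial_x u_j$, so the variation splits into two pieces. The dependence through $A_{ij}$ requires one integration by parts and yields $\beta\sum_{j,k}\partial_x\phi_j\partial_x u_k\frac{\partial}{\partial u_i}A_{jk}(u)$, while the dependence through $\partial_x u_j$ requires two integrations by parts and yields $-\beta\sum_j\partial_x(A_{ji}(u)\partial_x\phi_j)$. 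Getting the signs and the transpose index $A_{ji}$ correct in this viscous term is the main bookkeeping obstacle.

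Finally, I would read off the endpoint conditions from the boundary terms generated by the integration by parts in $t$. Since the initial value is prescribed, $u_i(0,x)=u_i^0(x)$ and hence $\delta u_i(0,\cdot)=0$, so the $t=0$ boundary term drops out. At $t=1$ the terminal state $u_1=u(1,\cdot)$ is free; collecting the surviving boundary term $\int_\Omega\phi_i(1,x)\,\delta u_i(1,x)\,dx$ and the variation $\int_\Omega\frac{\delta}{\delta u_i^1}\mathcal{H}(u^1)\,\delta u_i(1,x)\,dx$ of $\mathcal{H}(u_1)$, and requiring their sum to vanish for arbitrary $\delta u_i(1,\cdot)$, gives the transversality condition $\frac{\delta}{\delta u_i^1}\mathcal{H}(u^1)+\phi_i(1,x)=0$. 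Together with $v_i=\partial_x\phi_i$ and the two coupled PDEs, this yields the full primal--dual system \eqref{MFCeq}.
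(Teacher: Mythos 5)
Your proposal is correct and follows essentially the same route as the paper's proof: both form the Lagrangian with multiplier $\phi$ for the constraint \eqref{MFC2}, set the first variations in $v$, $\phi$, $u$, and $u_1$ to zero, obtain $v_i=\partial_x\phi_i$ from the $v$-variation, and recover the adjoint equation and terminal condition from the $u$- and $u_1$-variations. Your handling of the viscous term (splitting into the $A_{ij}(u)$-dependence and the $\partial_x u_j$-dependence) and of the transversality condition matches the paper's computation.
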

\begin{proof}
Denote a Lagrange multiplier vector function $\phi=(\phi_1,\cdots,\phi_N)$. Consider the following saddle point problem 
\begin{equation*}
\inf_{u,v,u_1}\sup_\phi~\mathcal{L}(u,v,u_1,\phi),
\end{equation*}
where
\begin{equation*}
\begin{split}
\mathcal{L}(u,v,u_1,\phi)=&\int_0^1\Big[\frac{1}{2}\int_\Omega \sum_{i,j=1}^NC_{ij}(u)v_iv_jdx-\mathcal{F}(u)\Big]dt+\mathcal{H}(u_1)\\
&+\int_0^1\int_\Omega\sum_{i=1}^N\phi_i\Big(\partial_t u_i+\partial_x f_i(u)+\sum_{j=1}^N\partial_x(C_{ij}(u)\partial_x\phi_j)-\beta \sum_{j=1}^N\partial_x(A_{ij}(u)\partial_xu_j)\Big)dxdt.
\end{split}
\end{equation*}
 The saddle point system satisfies
 \begin{equation*}
 \left\{\begin{aligned}
&\frac{\delta}{\delta v_i}\mathcal{L}=0,\\
&\frac{\delta}{\delta \phi_i}\mathcal{L}=0,\\
&\frac{\delta}{\delta u_i}\mathcal{L}=0,\\
&\frac{\delta}{\delta u^1_i}\mathcal{L}=0.
\end{aligned}\right.
\end{equation*}
In detail, we have
\begin{equation*}
\left\{\begin{aligned}
&\sum_{j=1}^NC_{ij}(u)(v_i-\partial_x\phi_i)=0,\\
&\partial_t u_i+\partial_x f_i(u)+\sum_{j=1}^N\partial_x(C_{ij}(u)\partial_x\phi_j)-\beta \sum_{j=1}^N\partial_x(A_{ij}(u)\partial_xu_j)=0,\\
&\frac{1}{2}\sum_{k,l=1}^N\frac{\partial}{\partial u_i}C_{kl}(u)v_kv_l-\frac{\delta}{\delta u_i}\mathcal{F}(u)-\partial_t\phi_i-\sum_{k=1}^K\partial_x\phi_k\frac{\partial}{\partial u_i}f_k(u)\\
&-\sum_{k,l=1}^N\frac{\partial}{\partial u_i}C_{kl}(u)\partial_x\phi_k\partial_x\phi_l-\beta \sum_{j=1}^N\partial_x(A_{ji}(u)\partial_x\phi_j)+\beta\sum_{j,k=1}^N\partial_x\phi_j\partial_xu_k\frac{\partial}{\partial u_i}A_{jk}(u)=0,\\
&\frac{\delta}{\delta u_i^1}\mathcal{H}(u^1)+\phi_i(1,x)=0.
\end{aligned}\right.
 \end{equation*} 
By substituting $v_i=\partial_x\phi_i$ into the third equality, we finish the derivation.  
\end{proof}
\begin{proposition}
PDE system \eqref{MFCeq} has the following Hamiltonian flow formulation in $(\mathcal{M}, \g)$. For $i=1,\cdots, N$, 
\begin{equation*}
\left\{\begin{aligned}
\partial_tu_i=&\frac{\delta}{\delta\phi_i}\mathcal{H}_{\mathcal{G}}(u, \phi),\\
 \partial_t\phi_i=&-\frac{\delta}{\delta u_i}\mathcal{H}_{\mathcal{G}}(u, \phi),
\end{aligned}\right.
\end{equation*}
where we define a Hamiltonian functional $\mathcal{H}_{\mathcal{G}}\colon \mathcal{M}\times C^{\infty}(\Omega)^N\rightarrow\mathbb{R}$ as
\begin{equation}\label{eq:hamiltonian_functional}
\mathcal{H}_{\mathcal{G}}(u,\phi)=\int_\Omega \sum_{i,j=1}^N\Big[\frac{1}{2}C_{ij}(u)\partial_x\phi_i\partial_x\phi_j-\beta A_{ij}(u)\partial_x\phi_i\partial_xu_j\Big]dx+\int_\Omega\sum_{k=1}^N\Big[\partial_x\phi_k f_k(u)\Big] dx+\mathcal{F}(u).
\end{equation}
In addition, the Hamilton-Jacobi equation in $(\mathcal{M}, \g)$ satisfies
\begin{equation*}
\begin{split}
&\partial_t\mathcal{U}(t,u)+\frac{1}{2}\sum_{i,j=1}^N\int_\Omega\partial_x\frac{\delta}{\delta u_i(x)}\mathcal{U}(t,u)\cdot \partial_x\frac{\delta}{\delta u_j(x)}\mathcal{U}(t,u)\cdot C_{ij}(u(x))dx\\
&\hspace{1.4cm}+\sum_{k=1}^N\int_\Omega\partial_x\frac{\delta}{\delta u_k(x)}\mathcal{U}(t,u)\cdot f_k(u(x))dx+\mathcal{F}(u)\\
&\hspace{1.4cm}-\beta \sum_{i,j=1}^N\int_\Omega \partial_x\frac{\delta}{\delta u_i(x)}\mathcal{U}(t,u)\cdot\partial_x u_j(x)\cdot C_{ij}(u(x))dx=0,
\end{split}
\end{equation*}
where $\mathcal{U}\colon [0,1]\times L^2(\Omega)^N\rightarrow\mathbb{R}$ is a value functional. 
\end{proposition}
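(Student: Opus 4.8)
The plan is to treat the two assertions of the proposition separately. First I would verify that the critical-point system \eqref{MFCeq} is exactly the Hamiltonian flow generated by $\mathcal{H}_{\mathcal{G}}$ in \eqref{eq:hamiltonian_functional}, by computing the two functional derivatives of $\mathcal{H}_{\mathcal{G}}$ and matching them term by term against \eqref{MFCeq}. Then I would obtain the Hamilton--Jacobi equation from the dynamic programming principle for the control problem \eqref{MFC}, recognizing $\mathcal{H}_{\mathcal{G}}$ as the Legendre transform in $v$ of the running cost. Both steps are in the end direct variational computations in $(\mathcal{M},\g)$; the only genuine work is careful functional-derivative and integration-by-parts bookkeeping on the torus $\Omega$, together with the sign conventions tying the costate $\phi$ to the value functional $\mathcal{U}$.

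For the Hamiltonian flow, the derivative $\frac{\delta}{\delta\phi_i}\mathcal{H}_{\mathcal{G}}$ is the easy one: since $\mathcal{H}_{\mathcal{G}}$ is quadratic in $\partial_x\phi$ through $C_{ij}$ and linear in $\partial_x\phi$ through the $A_{ij}$ and $f_k$ terms, varying $\phi_i$ and integrating by parts once yields $-\partial_x f_i(u)-\sum_j\partial_x(C_{ij}(u)\partial_x\phi_j)+\beta\sum_j\partial_x(A_{ij}(u)\partial_x u_j)$, which is precisely $\partial_t u_i$ in the first line of \eqref{MFCeq}. The derivative $\frac{\delta}{\delta u_i}\mathcal{H}_{\mathcal{G}}$ is the laborious one: the chain rule acting on $C_{jk}(u)$, $A_{jk}(u)$, $f_k(u)$ produces the algebraic terms $\frac12\sum_{j,k}\frac{\partial C_{jk}}{\partial u_i}\partial_x\phi_j\partial_x\phi_k$, $\sum_k\partial_x\phi_k\frac{\partial f_k}{\partial u_i}$, $-\beta\sum_{j,k}\frac{\partial A_{jk}}{\partial u_i}\partial_x\phi_j\partial_x u_k$ and $\frac{\delta}{\delta u_i}\mathcal{F}(u)$, while the explicit $\partial_x u_j$ inside the diffusion term contributes, after one integration by parts, $+\beta\sum_j\partial_x(A_{ji}(u)\partial_x\phi_j)$. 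Negating the total and comparing with the second line of \eqref{MFCeq} confirms $\partial_t\phi_i=-\frac{\delta}{\delta u_i}\mathcal{H}_{\mathcal{G}}$. This is essentially the computation already carried out in Proposition \ref{MFH}, merely reorganized into Hamiltonian form.

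For the Hamilton--Jacobi equation, I would let $\mathcal{U}(t,u)$ be the optimal value of \eqref{MFC} started from $u$ at time $t$ and write the dynamic programming principle $\partial_t\mathcal{U}+\inf_v\{\tfrac12\int_\Omega\sum_{i,j}C_{ij}(u)v_iv_j\,dx-\mathcal{F}(u)+\int_\Omega\sum_i\tfrac{\delta\mathcal{U}}{\delta u_i}\,\partial_t u_i\,dx\}=0$, with $\partial_t u_i$ supplied by the constraint \eqref{MFC2}. Integrating by parts to move every $\partial_x$ onto $\frac{\delta\mathcal{U}}{\delta u_i}$ converts the $f_i$, $C_{ij}v_j$ and $A_{ij}\partial_x u_j$ contributions into $\int\partial_x\frac{\delta\mathcal{U}}{\delta u_i}f_i$, $\int\sum_j\partial_x\frac{\delta\mathcal{U}}{\delta u_i}C_{ij}v_j$ and $-\beta\int\sum_j\partial_x\frac{\delta\mathcal{U}}{\delta u_i}A_{ij}\partial_x u_j$. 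The inner problem is then a strictly convex quadratic in $v$ whose minimizer is $v_i=\partial_x\phi_i$ with $\phi=-\frac{\delta\mathcal{U}}{\delta u}$ (the sign fixed by the terminal condition $\frac{\delta}{\delta u_i^1}\mathcal{H}(u^1)+\phi_i(1,x)=0$), which recovers the relation $v_i=\partial_x\phi_i$ of Proposition \ref{MFH}. Substituting this minimizer collapses the bracket into the Hamiltonian $\mathcal{H}_{\mathcal{G}}$ evaluated at $\frac{\delta\mathcal{U}}{\delta u}$; since $\mathcal{H}_{\mathcal{G}}$ is exactly the Legendre transform in $v$ of the running Lagrangian, the minimized Hamilton--Jacobi--Bellman equation takes the displayed Hamilton--Jacobi form.

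I expect the main obstacle to be the sign and convention bookkeeping rather than any conceptual difficulty. One must fix consistently whether the costate equals $+\frac{\delta\mathcal{U}}{\delta u}$ or $-\frac{\delta\mathcal{U}}{\delta u}$, and track the Legendre-transform sign when passing between the $\inf_v$ of the control problem and the $\sup_v$ that defines $\mathcal{H}_{\mathcal{G}}$, so that the quadratic, flux, diffusion, and $\mathcal{F}$ terms all land with the signs shown in the statement. A secondary point requiring care is that these are formal functional derivatives on the infinite-dimensional space $(\mathcal{M},\g)$: a fully rigorous treatment would require differentiability of $\mathcal{U}$ and justification of the pointwise minimization over $v$, which I would handle at the formal level consistent with the rest of the paper.
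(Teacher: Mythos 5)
Your first half --- computing $\frac{\delta}{\delta\phi_i}\mathcal{H}_{\mathcal{G}}$ and $\frac{\delta}{\delta u_i}\mathcal{H}_{\mathcal{G}}$ and matching them term by term against \eqref{MFCeq} --- is correct and is exactly the paper's route: the paper's proof is a one-line appeal to a ``direct calculation'' (with details deferred to \cite{LLO}), and your bookkeeping, including the term $+\beta\sum_j\partial_x(A_{ji}(u)\partial_x\phi_j)$ produced by integrating by parts the explicit $\partial_x u_j$ in the diffusion part of \eqref{eq:hamiltonian_functional}, reproduces it faithfully.

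The Hamilton--Jacobi half, however, contains a genuine sign gap, not just bookkeeping to be deferred. With your definition of $\mathcal{U}(t,u)$ as the optimal value of the infimum problem \eqref{MFC} (so $\mathcal{U}(1,\cdot)=\mathcal{H}$), the dynamic programming bracket does \emph{not} collapse to $\mathcal{H}_{\mathcal{G}}(u,\tfrac{\delta\mathcal{U}}{\delta u})$: substituting the minimizer $v_j=-\partial_x\tfrac{\delta\mathcal{U}}{\delta u_j}$ gives $\inf_v\{\cdots\}=-\mathcal{H}_{\mathcal{G}}\big(u,-\tfrac{\delta\mathcal{U}}{\delta u}\big)$, and since $\mathcal{H}_{\mathcal{G}}$ is not odd in $\phi$ (it contains the quadratic kinetic term and $\mathcal{F}$), one has $-\mathcal{H}_{\mathcal{G}}(u,-p)\neq\mathcal{H}_{\mathcal{G}}(u,p)$. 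Concretely, your HJB would read
\begin{equation*}
\partial_t\mathcal{U}-\frac{1}{2}\sum_{i,j=1}^N\int_\Omega\partial_x\frac{\delta\mathcal{U}}{\delta u_i}\,\partial_x\frac{\delta\mathcal{U}}{\delta u_j}\,C_{ij}(u)\,dx+\sum_{k=1}^N\int_\Omega\partial_x\frac{\delta\mathcal{U}}{\delta u_k}\,f_k(u)\,dx-\mathcal{F}(u)-\beta\sum_{i,j=1}^N\int_\Omega\partial_x\frac{\delta\mathcal{U}}{\delta u_i}\,\partial_xu_j\,A_{ij}(u)\,dx=0,
\end{equation*}
in which the kinetic term and $\mathcal{F}$ carry the opposite signs from the statement. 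The repair requires changing the definition of $\mathcal{U}$, not merely tracking signs: the paper's terminal condition $\phi_i(1,x)=-\tfrac{\delta}{\delta u_i^1}\mathcal{H}(u^1)$ forces the identification $\phi=+\tfrac{\delta\mathcal{U}}{\delta u}$ with terminal datum $\mathcal{U}(1,\cdot)=-\mathcal{H}$, i.e.\ $\mathcal{U}$ is \emph{minus} your value functional (equivalently, the value of the corresponding supremum problem); with that choice the DPP computation does land on the displayed equation. The paper sidesteps this entirely: it reads the Hamilton--Jacobi equation off as $\partial_t\mathcal{U}+\mathcal{H}_{\mathcal{G}}(u,\tfrac{\delta\mathcal{U}}{\delta u})=0$, the canonical HJ equation generated by the Hamiltonian, exactly as it does explicitly in the BNS analogue of Section 3. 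One point in your favor: your derivation produces $A_{ij}$ in the $\beta$-term, consistent with \eqref{eq:hamiltonian_functional}; the $C_{ij}$ appearing in that term in the paper's stated HJ equation appears to be a typo.
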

\begin{proof}
The proof follows from a direct calculation. See detailed derivations in \cite{LLO}.  
\end{proof}
\section{Controlling barotropic compressible Navier--Stokes equations}\label{section3}
In this section, we present an example for control problems of systems of conservation laws. 

We study one dimensional barotropic compressible Navier--Stokes equations. We shall derive a primal-dual system for this system. Consider 
\begin{equation}\label{BNS}
\left\{\begin{aligned}
&\partial_t\rho+\partial_x(\rho v)=0,\\
&\partial_t (\rho v)+\partial_x(\rho v^2)+\partial_xP(\rho)=\beta \partial_x(\mu(\rho)\partial_xv).
\end{aligned}\right.
\end{equation}
Here $\rho=\rho(t,x)$ is the density function, $v=v(t,x)$ is the vector-valued velocity function and $\beta>0$ is diffusion constant. For simplicity, let $\rho$ stay in one dimensional compact spatial domain with periodic boundary conditions. E.g., $\Omega=\mathbb{T}^1$. And the pressure term $P(\rho)$ and the viscosity coefficient $\mu(\rho)$ are smooth functions of variable $\rho$. E.g.,
 \begin{equation*}
P(\rho)=\rho^\gamma, \qquad\mu(\rho)=\rho^\alpha,
\end{equation*}
where $\gamma>1$ and $\alpha\in \mathbb{R}$ are given constants. The PDE system \eqref{BNS} has a conservation law system formulation. Denote $m=\rho v$, i.e., $v=\frac{m}{\rho}$ when $\rho>0$. In this notation, equation system \eqref{BNS} satisfies 
\begin{equation}\label{BNS1}
\left\{\begin{aligned}
&\partial_t\rho+\partial_x m=0,\\
&\partial_t m+\partial_x( \frac{m^2}{\rho})+\partial_x P(\rho)=\beta\partial_x(\mu(\rho)\partial_x\frac{m}{\rho}).
\end{aligned}\right.
\end{equation}
The system \eqref{BNS1} satisfies 
\begin{equation*}
u=\begin{pmatrix}
\rho\\ m
\end{pmatrix}
\in\mathbb{R}_+\times\mathbb{R}, \quad f(u)=
\begin{pmatrix}
m \\\frac{|m|^2}{\rho}+P(\rho)\end{pmatrix}\in\mathbb{R}^2,\quad \mathcal{C}(u)=\begin{pmatrix}
0 \\
\partial_x(\mu(\rho)\partial_x\frac{m}{\rho})
\end{pmatrix}\in\mathbb{R}^2.
\end{equation*}

\subsection{Entropy--entropy flux--Fisher information dissipation}
In this subsection, we show that system \eqref{BNS} satisfies the entropy-entropy flux--metric--Fisher information conditions. 
\begin{proposition}[Entropy-entropy flux-metric-Fisher information]
There exists an entropy function, entropy flux, Fisher information and metric operator for equation \eqref{BNS1}.
\begin{itemize}
\item[(i)]{\em Entropy-entropy flux}:
Denote an entropy function $G\colon \mathbb{R}_+\times\mathbb{R}\rightarrow\mathbb{R}$ and an entropy flux $\Psi\colon\mathbb{R}_+\times\mathbb{R}\rightarrow\mathbb{R}$, such that 
\begin{equation*}
G(\rho, m)=\frac{m^2}{2\rho}+\hat P(\rho),\quad \Psi(\rho, m)=\frac{m^3}{2\rho^2}+\hat P'(\rho)m,
\end{equation*}
where $\hat P\colon \mathbb{R}_+\rightarrow\mathbb{R}$ is a function satisfying  
\begin{equation*}
\hat P''(\rho)=\frac{P'(\rho)}{\rho}.
\end{equation*}
Suppose $\left(\rho(t,x), m(t,x)\right)$ satisfies equation \eqref{BNS} with $\beta=0$. Then the following entropy solution condition hold. 
\begin{equation*}
\partial_t G(\rho(t,x), m(t,x))+\partial_x\Big(\Psi(\rho(t,x), m(t,x))\Big)\leq 0. 
\end{equation*}
\item[(ii)]{\em Metric}: Consider a space 
\begin{equation*}
\mathcal{M}=\Big\{(\rho, m)\in C^{\infty}(\Omega)^2\colon \rho>0,~~\int_\Omega \rho dx=c_1,~~\int_\Omega m dx=c_2,~~\textrm{where $c_1>0$, $c_2\in\mathbb{R}$}\Big\}. 
\end{equation*} 
The tangent space of $\mathcal{M}$ at $(\rho, m)$ satisfies 
\begin{equation*}
T_u\mathcal{M}=\Big\{(\dot\rho, \dot m)\in C^{\infty}(\Omega)\times C^{\infty}(\Omega)\colon \int_\Omega \dot \rho dx=0,\quad \int_\Omega \dot m dx=0\Big\}.
\end{equation*}
In this case, the (degenerate) metric $\g\colon \mathcal{M}\times T_u\mathcal{M}\times T_u\mathcal{M}\rightarrow\mathbb{R}$ satisfies
\begin{equation*}
\g(\rho, m)((\dot\rho_1, \dot m_1), (\dot\rho_2, \dot m_2))=\int_\Omega \partial_x\psi_1(x)\cdot\partial_x\psi_2(x)\cdot\mu(\rho(x)) dx,
\end{equation*}
where $(\dot\rho_i, \dot m_i)\in T_u\mathcal{M}$ and $(\dot m_i, \psi_i)$ satisfies the following parabolic equation 
\begin{equation*}
\dot m_i=-\partial_x(\mu(\rho)\partial_x\psi_i),\quad i=1,2. 
\end{equation*}
\item[(iii)] {\em Fisher information dissipation}: Denote an entropy functional $\mathcal{G}\colon \mathcal{M}\rightarrow\mathbb{R}$ as 
\begin{equation*}
\mathcal{G}(\rho, m)=\int_\Omega G(\rho(x), m(x))dx. 
\end{equation*}
Suppose $(\rho(t,x), m(t,x))$ satisfies equation system \eqref{BNS}, then $\mathcal{G}$ is a Lyapunov functional. In detail, the following dissipation holds. 
\begin{equation*}
\begin{split}
\frac{d}{dt}\mathcal{G}(\rho(t,\cdot), m(t,\cdot))=&-\beta\mathcal{I}_{\mathcal{G}}(\rho(t,\cdot), m(t,\cdot))\leq 0, 
\end{split}
\end{equation*}
where $\mathcal{I}_{\mathcal{G}}\colon\mathcal{M}\rightarrow\mathbb{R}_+$ is a Fisher information functional defined as 
\begin{equation*}
\begin{split}
\mathcal{I}_{\mathcal{G}}(\rho, m)=&\int_\Omega |\partial_x\frac{\delta}{\delta m}\mathcal{G}(\rho(x), m(x))|^2\mu(\rho(x))dx\\
=&\int_\Omega |\partial_x\frac{m(x)}{\rho(x)}|^2 \mu(\rho(x))dx. 
\end{split}
\end{equation*}
\end{itemize}
\end{proposition}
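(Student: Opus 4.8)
The plan is to recognize this three-part statement as the specialization of the general entropy--entropy flux--metric framework of Section~\ref{section2} to the BNS system \eqref{BNS1}, for which $u=(\rho,m)$ and $f(u)=(m,\ \tfrac{m^2}{\rho}+P(\rho))$. Once the correct triple $(G,\Psi,C)$ is exhibited and shown to satisfy Definition~\ref{def2}, parts (i)--(iii) follow by invoking, respectively, the Friedrichs--Lax entropy inequality, Definition~\ref{metrics}, and the entropy dissipation corollary, together with a few explicit derivative computations. So the first step is to compute and record the partial derivatives of $G(\rho,m)=\tfrac{m^2}{2\rho}+\hat P(\rho)$: its gradient $(\partial_\rho G,\partial_m G)=(\hat P'(\rho)-\tfrac{m^2}{2\rho^2},\ \tfrac{m}{\rho})$ and its Hessian $\nabla^2_{uu}G$, whose entries are $\tfrac{m^2}{\rho^3}+\hat P''(\rho)$, $-\tfrac{m}{\rho^2}$, and $\tfrac{1}{\rho}$.

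For part (i), I would verify the algebraic entropy--entropy flux identity $\partial_{u_i}\Psi=\sum_j \partial_{u_j}G\,\partial_{u_i}f_j$ of Definition~\ref{def2} directly, checking the $i=\rho$ and $i=m$ components separately and using the defining relation $\hat P''(\rho)=P'(\rho)/\rho$ to match the $\rho$-derivative. Given this identity, the entropy solution condition is standard: for smooth solutions of \eqref{BNS} with $\beta=0$ one substitutes the two conservation laws into $\partial_t G=\sum_i \partial_{u_i}G\,\partial_t u_i$ and uses the chain rule $\partial_x f_j=\sum_i \partial_{u_i}f_j\,\partial_x u_i$ to obtain $\partial_t G+\partial_x\Psi=0$, while for entropy-admissible weak solutions the convexity of $G$ yields the inequality $\leq 0$, as in \cite{LF}.

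For part (ii), the key observation is that the dissipation in \eqref{BNS1} can be written in conservative form $\beta\sum_j\partial_x(A_{ij}\partial_x u_j)$ with the (non-symmetric) matrix $A=\begin{pmatrix}0&0\\ -\tfrac{m\mu(\rho)}{\rho^2}&\tfrac{\mu(\rho)}{\rho}\end{pmatrix}$, obtained by expanding $\mu(\rho)\partial_x\tfrac{m}{\rho}=\tfrac{\mu(\rho)}{\rho}\partial_x m-\tfrac{m\mu(\rho)}{\rho^2}\partial_x\rho$. I would then propose the degenerate, rank-one metric element $C=\begin{pmatrix}0&0\\0&\mu(\rho)\end{pmatrix}$ and check that $C\,\nabla^2_{uu}G=A$; this is where the computation must be done carefully, since $A$ is not symmetric yet must factor through a symmetric $C\succeq 0$. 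The off-diagonal match $C_{22}(\nabla^2_{uu} G)_{21}=\mu(\rho)\cdot(-\tfrac{m}{\rho^2})=A_{21}$ and the diagonal match $C_{22}(\nabla^2_{uu} G)_{22}=\mu(\rho)\cdot\tfrac1\rho=A_{22}$ are the decisive identities. With $C$ in hand, specializing Definition~\ref{metrics} collapses the double sum to its $(2,2)$ term, and the constraint $\sigma_i=-\sum_j\partial_x(C_{ij}\partial_x\phi_j)$ reduces to $\dot m=-\partial_x(\mu(\rho)\partial_x\psi)$ with $\psi:=\phi_2$, reproducing the stated inner product and its degeneracy in the $\rho$-direction.

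Part (iii) is then immediate: since $(G,\Psi,C)$ satisfies Definition~\ref{def2}, the entropy dissipation corollary gives $\tfrac{d}{dt}\mathcal{G}=-\beta\mathcal{I}_{\mathcal{G}}\leq 0$, and substituting the rank-one $C$ into the general formula $\mathcal{I}_{\mathcal{G}}=\sum_{i,j}\int_\Omega \partial_x\partial_{u_i}G\,\partial_x\partial_{u_j}G\,C_{ij}\,dx$ leaves only $\int_\Omega|\partial_x\partial_m G|^2\mu(\rho)\,dx=\int_\Omega|\partial_x\tfrac{m}{\rho}|^2\mu(\rho)\,dx$, as claimed. I expect the only real obstacle to be the factorization $C\,\nabla^2_{uu}G=A$ in part (ii): one must guess the degenerate metric $C$ and confirm that the asymmetry of the BNS diffusion matrix is exactly absorbed by the asymmetry of $\nabla^2_{uu}G$; everything else is bookkeeping with the chain rule and the general theorems of Section~\ref{section2}.
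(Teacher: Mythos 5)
Your proposal is correct, but it takes a genuinely different route from the paper's own proof. You treat the proposition as a \emph{verification} problem: check the flux identity of Definition~\ref{def2} for the stated pair $(G,\Psi)$, exhibit the diffusion matrix $A=\begin{pmatrix}0 & 0\\ -m\mu(\rho)/\rho^{2} & \mu(\rho)/\rho\end{pmatrix}$ and the metric element $C=\mathrm{diag}(0,\mu(\rho))$, confirm the factorization $C\nabla^2_{uu}G=A$, and then import the general machinery of Section~\ref{section2} (Definition~\ref{metrics} for part (ii), the entropy--entropy flux--Fisher information dissipation corollary for part (iii)). The paper argues bottom-up instead: for (i) it does not verify the given $G$ but \emph{derives} it, positing the ansatz $G=\frac{m^2}{2\rho^k}+\hat P(\rho)$, imposing the compatibility condition $\Psi_{\rho m}=\Psi_{m\rho}$ (equation \eqref{ee}), and solving to find $k\in\{1,2\}$; for (ii)--(iii) it performs the Lyapunov computation directly,
\begin{equation*}
\frac{d}{dt}\mathcal{G}=-\int_\Omega\partial_x\Psi\,dx+\beta\int_\Omega\frac{m}{\rho}\,\partial_x\Big(\mu(\rho)\partial_x\frac{m}{\rho}\Big)\,dx=-\beta\int_\Omega\Big|\partial_x\frac{m}{\rho}\Big|^2\mu(\rho)\,dx,
\end{equation*}
deferring the metric construction itself to Section~\ref{metric}. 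Your route buys modularity: it makes explicit the matrices $A$ and $C$, which the paper never writes down (it records the dissipation only as the vector $\mathcal{C}(u)$), and it reduces part (iii) to a corollary already proved in general; your appeal to convexity of $G$ for the weak-solution inequality $\leq 0$ (which indeed holds here, since $G_{mm}=1/\rho>0$ and $\det\nabla^2_{uu}G=P'(\rho)/\rho^2>0$ for $\gamma>1$) is, if anything, slightly more careful than the paper, which only establishes equality for smooth solutions. The paper's route, in exchange, explains where the entropy comes from and keeps the argument self-contained. One small correction: in part (ii) you attribute the asymmetry of $A$ to ``the asymmetry of $\nabla^2_{uu}G$''; the Hessian is of course symmetric, and the asymmetry of $A$ arises simply because a product of two symmetric matrices (the degenerate $C$ and the Hessian) need not be symmetric. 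This wording slip does not affect your computation, whose decisive identities are correct.
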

\begin{proof}
(i) We first apply Lax's entropy-entropy flux condition \cite{Evans, Lax}. We need to find both entropy and entropy flux function. Denote $(\rho, m)$ as a solution for dynamics \eqref{CBNS} with $\beta=0$. By a direct computation, we have
\begin{equation*}
\begin{split}
\frac{\partial}{\partial t}G(\rho, m)=&G_\rho(\rho, m)\partial_t\rho+G_m(\rho, m)\partial_t m\\
=&-G_\rho(\rho, m)\partial_x m-G_m(\rho, m)\big(\partial_x(\frac{m^2}{\rho})+\partial_xP(\rho)\big)\\
=&-\Big\{G_\rho(\rho, m)\partial_xm+G_m(\rho, m)\frac{2m}{\rho}\partial_xm+G_m(\rho, m)\frac{m^2}{\rho^2}\partial_x\rho-G_m(\rho, m)P'(\rho)\partial_x\rho\Big\}\\
=&-\Big\{G_\rho(\rho, m)+G_m(\rho, m)\frac{2m}{\rho}\Big\}\partial_xm-\Big\{-G_m(\rho, m)\frac{m^2}{\rho^2}+G_m(\rho, m)P'(\rho)\Big\}\partial_x\rho.
\end{split}
\end{equation*}
Clearly, the entropy-entropy flux condition requires that there exists a function $\Psi\colon \mathbb{R}_+\times\mathbb{R}\rightarrow\mathbb{R}$, such that 
\begin{equation*}
\left\{\begin{aligned}
&\Psi_\rho(\rho, m)=G_m(\rho, m)\Big(-\frac{m^2}{\rho^2}+P'(\rho)\Big),\\
&\Psi_m(\rho, m)=G_\rho(\rho, m)+G_m(\rho, m)\frac{2m}{\rho}.
\end{aligned}\right.
\end{equation*}
This is to enforce the condition $\Psi_{\rho m}=\Psi_{m\rho}$. In other words, we need to solve the following PDE:
\begin{equation*}
\Big(-G_m(\rho, m)\frac{m^2}{\rho^2}+G_m(\rho, m)P'(\rho)\Big)_m=\Big(G_\rho(\rho, m)+G_m(\rho, m)\frac{2m}{\rho}\Big)_\rho.
\end{equation*}
I.e., 
\begin{equation*}
-G_{m m}(\rho, m)\frac{m^2}{\rho^2}-G_m(\rho, m)\frac{2m}{\rho^2}+G_{mm}(\rho, m)P'(\rho)=G_{\rho\rho}(\rho, m)+G_{m\rho}(\rho, m)\frac{2m}{\rho}-G_m(\rho, m)\frac{2m}{\rho^2}.
\end{equation*}
I.e., 
\begin{equation}\label{ee}
G_{mm}(\rho, m)(P'(\rho)-\frac{m^2}{\rho^2})=G_{\rho\rho}(\rho, m)+G_{m\rho}(\rho, m)\frac{2m}{\rho}.
\end{equation}
Assume that $G$ has a formulation
\begin{equation*}
G(\rho, m):=\frac{m^2}{2\rho^k}+\hat P(\rho).
\end{equation*}
Then equation \eqref{ee} forms 
\begin{equation*}
\left\{\begin{aligned}
G_\rho(\rho, m)=&-\frac{km^2}{2\rho^{k+1}}+\hat P'(\rho),\quad G_{\rho\rho}(\rho, m)=\frac{k(k+1)m^2}{2\rho^{k+2}}+\hat P''(\rho),\\
G_m(\rho, m)=&\frac{m}{\rho^k},\quad G_{m\rho}(\rho,m)=-\frac{km}{\rho^{k+1}},\quad G_{mm}(\rho,m)=\frac{1}{\rho^k}.
\end{aligned}\right.
\end{equation*}
Hence condition \eqref{ee} satisfies 
\begin{equation*}
\frac{1}{\rho^k}(P'(\rho)-\frac{m^2}{\rho^2})=\frac{k(k+1)m^2}{2\rho^{k+2}}+\hat P''(\rho)-\frac{2km^2}{\rho^{k+2}}.
\end{equation*}
I.e., 
\begin{equation*}
(\frac{k(k+1)}{2}-2k+1)\frac{m^2}{\rho^{k+2}}+\hat P''(\rho)-\frac{P'(\rho)}{\rho^k}=0.
\end{equation*}
In this case, $k=1$ or $2$. Here we are only interested in $k=1$, such that  
\begin{equation*}
G(\rho, m)=\frac{m^2}{2\rho}+\hat P(\rho), \quad \textrm{where $\hat P''(\rho, m)=\frac{P'(\rho)}{\rho}$}.
\end{equation*}

(ii), (iii): When $k=1$, we check that the integration of entropy function $G$, i.e. $\mathcal{G}(\rho, m)=\int_\Omega G(\rho, m)dx$, forms a Lyapunov function for dynamics \eqref{BNS1}. Denote $(\rho, m)$ as a solution for dynamics \eqref{BNS1} with $\beta>0$. Then 
\begin{equation*}
\begin{split}
\frac{d}{dt}\mathcal{G}(\rho(t,\cdot), m(t,\cdot))=&\int_\Omega \frac{\partial}{\partial t}G(\rho, m) dx\\
=&\int_\Omega G_\rho(\rho, m)\partial_t\rho+G_m(\rho, m)\partial_t m dx\\
=&\int_\Omega -\partial_x\Psi(\rho, m) dx+\beta \int_\Omega G_m(\rho, m)\partial_x(\mu(\rho)\partial_x\frac{m}{\rho})dx\\
=&\beta \int_\Omega \frac{m}{\rho}\partial_x(\mu(\rho)\partial_x\frac{m}{\rho})dx\\
=&-\beta \int_\Omega |\partial_x\frac{m}{\rho}|^2\mu(\rho)dx.
\end{split}
\end{equation*}
Following the above dissipation behavior, we can define the metric operator. 
%function from this Lyapunov functional dissipation behavior. 
See details in section \ref{metric}. 
\end{proof}
\begin{remark}[Entropy flux and generalized Fisher information functional]
We remark that entropy--entropy flux conditions \cite{Evans} are not unique for equation \eqref{BNS1}. There are many entropy functions. In contrast, the proposed metric condition suggests a particular entropy and Fisher information functional. This follows the relation among dissipative operator, entropy and metric behind equation \eqref{BNS1}. In detail, 
\begin{equation*}
\begin{split}
\frac{d}{dt}\mathcal{G}(\rho(t,\cdot), m(t,\cdot))=&-\beta \g((\partial_t\rho, \partial_tm), (\partial_t\rho, \partial_tm))\\
=&-\beta\mathcal{I}_{\mathcal{G}}(\rho(t,\cdot), m(t,\cdot))\\
=&-\beta\int_\Omega |\partial_x\frac{m(t,x)}{\rho(t,x)}|^2\mu(\rho(t,x))dx\leq 0.
\end{split}
\end{equation*}
In the future, we shall study the Navier--Stokes metric operator and demonstrate its connection with the classical Wasserstein-$2$ metric. 
\end{remark}
\subsection{Barotropic compressible Navier--Stokes transport Metrics}\label{metric}
In this subsection, we study the metric operator $\g$ induced by the compressible Navier--Stokes equation \eqref{BNS}. 
We demonstrate that metric, gradient, flux-gradient and Hamiltonian flow dynamics have several coordinates, 
namely tangent space coordinates, and cotangent space coordinates (Eulerian coordinates in fluid dynamics). %and mapping coordinates.  

Consider a function space 
\begin{equation*}
\mathcal{M}=\Big\{(\rho, m)\in C^{\infty}(\Omega)^2\colon \rho>0,~~\int_\Omega \rho dx=c_1,~~\int_\Omega m dx=c_2,~~\textrm{where $c_1>0$, $c_2\in\mathbb{R}$}\Big\}. 
\end{equation*} 
The tangent space of $\mathcal{M}$ at $(\rho, m)$ satisfies 
\begin{equation*}
T_u\mathcal{M}=\Big\{(\dot\rho, \dot m)\in C^{\infty}(\Omega)\times C^{\infty}(\Omega)\colon \int_\Omega \dot \rho(x) dx=0,\quad \int_\Omega \dot m(x)dx=0\Big\}.
\end{equation*}
Denote a weighted elliptic operator $\Delta_{\mu(\rho)}\colon C^{\infty}(\Omega)\rightarrow C^{\infty}(\Omega)$ as
\begin{equation*}
\Delta_{\mu(\rho)}=\partial_x(\mu(\rho)\partial_x). 
\end{equation*}
In other words, for any test function $f\in C^{\infty}(\Omega)$, we have 
\begin{equation*}
(\Delta_{\mu(\rho)}f)(x)=\partial_x\Big(\mu(\rho)\partial_xf(x)\Big). 
\end{equation*}
\begin{proposition}[Degenerate $H^{-1}(\rho)$ metric]
Denote $\g\colon \mathcal{M}\times T_u\mathcal{M}\times T_u\mathcal{M}\rightarrow \mathbb{R}$. Then the following formulations of metric operator $\g$ hold.
\begin{itemize}
\item[(i)](Tangent space)
\begin{equation*}
\begin{split}
&\g(\rho, m)((\dot\rho_1, \dot m_1), (\dot\rho_2, \dot m_2))\\=&\int_\Omega \begin{pmatrix}
\dot\rho_1(x)\\
\dot m_1(x)
\end{pmatrix}^{\ts}\begin{pmatrix}
0 & 0\\
0 & (-\Delta_{\mu(\rho)})^{-1}\end{pmatrix}
 \begin{pmatrix}
\dot\rho_2(x)\\
\dot m_2(x)
\end{pmatrix}dx\\
=&\int_\Omega \dot m_1(x) \big((-\Delta_{\mu(\rho)})^{-1}\dot m_2\big)(x) dx. 
\end{split}
\end{equation*}
\item[(ii)](Cotangent space)
\begin{equation*}
\g(\rho, m)((\dot\rho_1, \dot m_1), (\dot\rho_2, \dot m_2))=\int_\Omega (\partial_x\psi_1(x), \partial_x\psi_2(x))\mu(\rho(x)) dx,
\end{equation*}
where $(\dot\rho_i, \dot m_i)\in T_u\mathcal{M}$ and $(\dot m_i, \psi_i)$ satisfies the following parabolic equation 
\begin{equation*}
\dot m_i=-\partial_x(\mu(\rho)\partial_x\psi_i),\quad i=1,2. 
\end{equation*}
\end{itemize}
\end{proposition}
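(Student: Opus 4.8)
The plan is to specialize the general metric of Definition~\ref{metrics} to the barotropic Navier--Stokes setting by first pinning down the metric element $C(\rho,m)$ and then reading off both formulations. First I would identify $C$. Comparing the diffusion term of \eqref{BNS1} with the gradient-flow representation in Proposition~\ref{gd}, the dissipative part of the $\rho$-equation vanishes while that of the $m$-equation equals $\beta\,\partial_x(\mu(\rho)\partial_x\frac{m}{\rho})$. Since $\frac{\partial}{\partial m}G(\rho,m)=\frac{m}{\rho}$ for the entropy $G(\rho,m)=\frac{m^2}{2\rho}+\hat P(\rho)$, the only consistent symmetric, semi-positive choice is
\begin{equation*}
C(\rho,m)=\begin{pmatrix} 0 & 0\\ 0 & \mu(\rho)\end{pmatrix},
\end{equation*}
which one checks against $C\,\nabla^2_{uu}G=A$ by a direct Hessian computation (using $G_{mm}=\frac{1}{\rho}$ and $G_{m\rho}=-\frac{m}{\rho^2}$). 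The vanishing first row encodes the degeneracy: the $\rho$-direction carries no length.

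With $C=\mathrm{diag}(0,\mu(\rho))$, the defining relation $\sigma_i=-\sum_j\partial_x(C_{ij}\partial_x\phi_j)$ from Definition~\ref{metrics} collapses: the $\rho$-component gives no constraint beyond the image, while the $m$-component reads $\dot m=-\partial_x(\mu(\rho)\partial_x\phi_2)=-\Delta_{\mu(\rho)}\phi_2$, so only $\phi_2$ is relevant. Renaming $\phi_2$ as $\psi$, this is exactly the stated elliptic constraint $\dot m_i=-\partial_x(\mu(\rho)\partial_x\psi_i)$, and the only surviving term of the double sum in Definition~\ref{metrics} is the $(2,2)$ term. Hence
\begin{equation*}
\g(\rho,m)((\dot\rho_1,\dot m_1),(\dot\rho_2,\dot m_2))=\int_\Omega \mu(\rho)\,\partial_x\psi_1\,\partial_x\psi_2\,dx,
\end{equation*}
which settles the cotangent-space formula (ii).

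For part (i), I would invert the elliptic relation: on the mean-zero tangent space $T_u\mathcal{M}$ the operator $-\Delta_{\mu(\rho)}$ is invertible, so $\psi_i=(-\Delta_{\mu(\rho)})^{-1}\dot m_i$. Integrating the cotangent formula by parts under periodic boundary conditions and substituting $-\Delta_{\mu(\rho)}\psi_1=\dot m_1$ gives
\begin{equation*}
\int_\Omega \mu(\rho)\,\partial_x\psi_1\,\partial_x\psi_2\,dx=\int_\Omega \psi_2\,(-\Delta_{\mu(\rho)}\psi_1)\,dx=\int_\Omega \dot m_1\,\big((-\Delta_{\mu(\rho)})^{-1}\dot m_2\big)\,dx.
\end{equation*}
Writing this as the quadratic form of $\mathrm{diag}\big(0,(-\Delta_{\mu(\rho)})^{-1}\big)$ reproduces the matrix expression in (i).

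The main obstacle is the well-definedness of $(-\Delta_{\mu(\rho)})^{-1}$ rather than any algebra. On $\mathbb{T}^1$ the weighted Laplacian $\Delta_{\mu(\rho)}$ annihilates constants, so it is only invertible after restricting to functions of zero mean; the key observation is that $T_u\mathcal{M}$ enforces $\int_\Omega\dot m\,dx=0$, which places each $\dot m_i$ precisely in the range of $\Delta_{\mu(\rho)}$ (with $\mu(\rho)=\rho^\alpha>0$ bounded below on the compact domain), so $\psi_i$ exists and is unique up to an additive constant. That ambiguity is harmless because only $\partial_x\psi_i$ enters the metric. I would therefore state at the outset that $\psi_i$ is determined modulo constants and that all integration-by-parts boundary terms vanish by periodicity, after which both identities follow by the computations above.
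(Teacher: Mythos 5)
Your proof is correct. Note that the paper offers no proof of this proposition at all: it is stated as a direct specialization of the general metric in Definition~\ref{metrics} to the BNS setting, so there is no written argument to compare against. Your derivation supplies exactly what that specialization requires --- identifying the metric element $C(\rho,m)=\mathrm{diag}\big(0,\mu(\rho)\big)$ via the condition $C\nabla^2_{uu}G=A$ with $G=\frac{m^2}{2\rho}+\hat P(\rho)$, converting between formulations (i) and (ii) by integration by parts on the torus, and observing that the zero-mean condition built into $T_u\mathcal{M}$ makes $-\Delta_{\mu(\rho)}$ invertible modulo additive constants so that each $\psi_i$ is well defined where it matters --- and your handling of the degeneracy in the $\rho$-direction agrees with the paper's own reading of the ``(degenerate) metric''.
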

\begin{proposition}[Gradient flows]
Consider a smooth functional $\mathcal{E}\colon \mathcal{M}\rightarrow\mathbb{R}$. The gradient flow of energy functional $\mathcal{E}(\rho, m)$ in $(\mathcal{M}, \g)$ satisfies 
\begin{equation}\label{gd}
\left\{\begin{aligned}
&\partial_t\rho=0,\\
&\partial_t m=\partial_x(\mu(\rho)\partial_x\frac{\delta}{\delta m}\mathcal{E}(\rho, m)).
\end{aligned}\right.
\end{equation}
In particular, if 
\begin{equation*}
\mathcal{E}(\rho,m)=\beta \mathcal{G}(\rho, m)=\beta\Big(\int_\Omega \frac{m^2}{2\rho}dx+\hat P(\rho)\Big),
\end{equation*}
then the gradient flow \eqref{gd} satisfies 
\begin{equation*}
\left\{\begin{aligned}
&\partial_t\rho=0,\\
&\partial_t m=\beta\partial_x(\mu(\rho)\partial_x\frac{m}{\rho}).
\end{aligned}\right.
\end{equation*}
\end{proposition}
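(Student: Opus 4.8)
The plan is to derive the flow directly from the defining variational property of a gradient (descent) flow in the Riemannian structure $(\mathcal{M},\g)$: the velocity $(\partial_t\rho,\partial_t m)$ is the tangent vector characterized by
\[
\g(\rho,m)\big((\partial_t\rho,\partial_t m),(\dot\rho,\dot m)\big)=-\int_\Omega\Big(\tfrac{\delta}{\delta\rho}\mathcal{E}\,\dot\rho+\tfrac{\delta}{\delta m}\mathcal{E}\,\dot m\Big)dx
\]
for every test direction $(\dot\rho,\dot m)\in T_u\mathcal{M}$. First I would exploit the cotangent (potential) representation of $\g$ established above: every admissible direction is written as $\dot m=-\partial_x(\mu(\rho)\partial_x\phi)=-\Delta_{\mu(\rho)}\phi$ for a potential $\phi$, and likewise I look for the flow velocity in the form $\partial_t m=-\partial_x(\mu(\rho)\partial_x\psi^\ast)$ for an unknown potential $\psi^\ast$.

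With this representation the left-hand side becomes $\int_\Omega \partial_x\psi^\ast\,\partial_x\phi\,\mu(\rho)\,dx$; integrating by parts and using $\dot m=-\Delta_{\mu(\rho)}\phi$ rewrites it as $\int_\Omega \psi^\ast\,\dot m\,dx$. Since the metric pairs only the momentum components, comparing $\int_\Omega\psi^\ast\dot m\,dx=-\int_\Omega \tfrac{\delta}{\delta m}\mathcal{E}\,\dot m\,dx$ over all mean-zero $\dot m$ yields $\psi^\ast=-\tfrac{\delta}{\delta m}\mathcal{E}$ up to an additive constant. Substituting back gives $\partial_t m=-\partial_x(\mu(\rho)\partial_x\psi^\ast)=\partial_x\big(\mu(\rho)\partial_x\tfrac{\delta}{\delta m}\mathcal{E}\big)$, together with $\partial_t\rho=0$, which is the asserted system; equivalently this is the $N=2$ instance of the general gradient-flow formula $\partial_t u_i=\sum_j\partial_x(C_{ij}\partial_x\tfrac{\delta}{\delta u_j}\mathcal{E})$ proved earlier, with metric element $C=\mathrm{diag}(0,\mu(\rho))$.

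For the special case I would simply compute the functional derivative of $\mathcal{E}=\beta\mathcal{G}$. Since $\hat P(\rho)$ is independent of $m$, only the kinetic term contributes to the momentum variation, giving $\tfrac{\delta}{\delta m}\mathcal{G}=\tfrac{\delta}{\delta m}\int_\Omega\tfrac{m^2}{2\rho}dx=\tfrac{m}{\rho}$. Inserting this into the general flow yields $\partial_t\rho=0$ and $\partial_t m=\beta\partial_x(\mu(\rho)\partial_x\tfrac{m}{\rho})$, as claimed.

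The main obstacle is conceptual rather than computational: it is the \emph{degeneracy} of $\g$. Because the metric element $C$ annihilates the density direction, the left-hand side of the defining relation is independent of $\dot\rho$, so imposing it against arbitrary $\dot\rho$ would spuriously require $\frac{\delta}{\delta\rho}\mathcal{E}=0$. The correct reading is that the gradient is taken within the finite-length (admissible) tangent directions, namely those with $\dot\rho=0$, equivalently that one inverts only the nondegenerate momentum block, producing the cometric $\mathrm{diag}(0,-\Delta_{\mu(\rho)})$. I would make this precise by restricting test functions to $\dot\rho=0$ and arguing that $\partial_t\rho=0$ is then the canonical representative, so that the flow stays tangent to $\mathcal{M}$ and the density evolves only through its coupling to $m$ inside $\mathcal{E}$.
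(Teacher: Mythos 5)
Your proposal is correct, and it arrives at the paper's formula by a more foundational route. The paper's own proof is essentially a two-line application of a definition: the gradient flow is obtained by applying minus the degenerate cometric operator, i.e. $\partial_t\rho = 0$ and $\partial_t m = \Delta_{\mu(\rho)}\frac{\delta}{\delta m}\mathcal{E}$, which is just the $N=2$ instance of the general gradient-flow proposition of Section 2 with metric element $C = \mathrm{diag}(0,\mu(\rho))$; the special case then follows from $\frac{\delta}{\delta m}\mathcal{E} = \beta\frac{m}{\rho}$, exactly as you compute. What you do differently is to \emph{derive} this cometric formula rather than assume it: you start from the Riemannian characterization of gradient descent (the metric pairing of the velocity against every test direction equals minus the differential of $\mathcal{E}$), pass to the potential representation $\dot m = -\Delta_{\mu(\rho)}\phi$, integrate by parts to identify the flow potential as $\psi^{\ast} = -\frac{\delta}{\delta m}\mathcal{E}$ up to a constant, and, crucially, confront the degeneracy of the metric head on, restricting tests to $\dot\rho = 0$ and taking $\partial_t\rho = 0$ as the canonical representative. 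The paper bypasses this issue by fiat, since its definition already encodes $\partial_t\rho = 0$; your version supplies the justification that this is the right notion of gradient for a metric that annihilates the density direction, which is the genuine content hidden behind the paper's phrase ``follows by its definition.'' One small correction: because admissible test directions $\dot\rho$ have zero mean, pairing against arbitrary $\dot\rho$ would only force $\frac{\delta}{\delta\rho}\mathcal{E}$ to be constant in $x$, not identically zero; this is immaterial, since your argument correctly discards those tests anyway.
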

\begin{proof}
(i) The gradient flow in $(\mathcal{M}, \g)$ follows by its definition. In other words, 
\begin{equation*}
\begin{split}
\begin{pmatrix}
\partial_t\rho\\ \partial_t m
\end{pmatrix}=&-\begin{pmatrix}
0 & 0\\
0 & -\Delta_{\mu(\rho)}\end{pmatrix}\begin{pmatrix}
\frac{\delta}{\delta\rho}\mathcal{E}(\rho, m)
\\
\frac{\delta}{\delta m}\mathcal{E}(\rho, m)
\end{pmatrix}\\
=&\begin{pmatrix}
0\\
-(-\Delta_{\mu(\rho)})\frac{\delta}{\delta m}\mathcal{E}(\rho, m)
\end{pmatrix}\\
=&\begin{pmatrix}
0\\
\partial_x\big(\mu(\rho)\partial_x\frac{\delta}{\delta m}\mathcal{E}(\rho, m)\big)
\end{pmatrix}.
\end{split}
\end{equation*}
(ii) Since $\mathcal{E}(\rho,m)=\beta \mathcal{G}(\rho, m)=\beta\Big(\int_\Omega \frac{m^2}{2\rho}dx+\hat P(\rho)\Big)$, then 
\begin{equation*}
\frac{\delta}{\delta m}\mathcal{E}(\rho, m)=\beta\frac{m}{\rho}. 
\end{equation*}
Hence the gradient flow \eqref{gd} satisfies
\begin{equation*}
\left\{\begin{aligned}
&\partial_t\rho=0,\\
&\partial_t m=\partial_x(\mu(\rho)\partial_x\frac{\delta}{\delta m}\mathcal{E}(\rho, m))=\beta\partial_x(\mu(\rho)\partial_x\frac{m}{\rho}),
\end{aligned}\right.
\end{equation*}
which finishes the proof. 
\end{proof}
We are now ready to present the flux-gradient flows in $(\mathcal{M}, \g)$. 
\begin{proposition}[Flux-gradient flows]
Consider a smooth functional $\mathcal{E}\colon \mathcal{M}\rightarrow\mathbb{R}$. The flux gradient flow of energy functional $\mathcal{E}(\rho, m)$ in $(\mathcal{M}, \g)$ satisfies 
\begin{equation}\label{fgd}
\left\{\begin{aligned}
&\partial_t\rho+\partial_x f_1(\rho, m)=0,\\
&\partial_t m+\partial_x f_2(\rho, m)=\partial_x(\mu(\rho)\partial_x\frac{\delta}{\delta m}\mathcal{E}(\rho, m)),
\end{aligned}\right.
\end{equation}
where $(f_1, f_2)$ is a flux function assumed to satisfy  
\begin{equation*}
\int_\Omega \Big(f_1(\rho, m)\partial_x\frac{\delta}{\delta \rho}\mathcal{E}(\rho, m)+f_2(\rho, m)\partial_x\frac{\delta}{\delta m}\mathcal{E}(\rho, m)) \Big)dx=0. 
\end{equation*}
In this case, $\mathcal{E}(\rho, m)$ is a Lyapunov functional for equation \eqref{fgd}. In detail, 
\begin{equation*}
\frac{d}{dt}\mathcal{E}(\rho(t,\cdot), m(t,\cdot))=-\int_\Omega |\partial_x\frac{\delta}{\delta m}\mathcal{E}(\rho, m)(t,x)|^2\mu(\rho(t,x))dx. 
\end{equation*}
In particular, if $\mathcal{E}(\rho,m)=\beta\mathcal{G}(\rho, m)=\beta\Big(\int_\Omega \frac{m(x)^2}{2\rho(x)}dx+\hat P(\rho)\Big)$, and $f_1(\rho, m)=m$, $f_2(\rho,m)=\frac{m^2}{\rho}+P(\rho)$, 
then the flux gradient flow \eqref{fgd} forms the barotropic compressible Navier--Stokes equation \eqref{BNS}.   
%\begin{equation*}
%\left\{\begin{aligned}
%&\partial_t\rho+\partial_x m=0,\\
%&\partial_t m+\partial_x(\frac{m^2}{\rho}+P(\rho))=\beta\partial_x(\mu(\rho)\partial_x\frac{m}{\rho}).
%\end{aligned}\right.
%\end{equation*}
%In this case, the entropy functional $\mathcal{G}(\rho, m)$ is a Lyapunov functional for equation \eqref{fgd}.  
%\begin{equation*}
%\frac{d}{dt}\mathcal{G}(\rho(t,\cdot), m(t,\cdot))=-\beta \mathcal{I}_{\mathcal{G}}(\rho(t,\cdot), m(t,\cdot))=-\beta %\int_\Omega |\partial_x\frac{m(t,x)}{\rho(t,x)}|^2\mu(\rho(t,x))dx. 
%\end{equation*}
\end{proposition}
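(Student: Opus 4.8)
The plan is to prove the dissipation identity for an arbitrary smooth $\mathcal{E}$ first, and then to read off the barotropic Navier--Stokes system as the special case. First I would differentiate $\mathcal{E}$ along a solution of \eqref{fgd} by the functional chain rule,
\begin{equation*}
\frac{d}{dt}\mathcal{E}(\rho(t,\cdot), m(t,\cdot))=\int_\Omega\Big(\frac{\delta}{\delta\rho}\mathcal{E}(\rho,m)\,\partial_t\rho+\frac{\delta}{\delta m}\mathcal{E}(\rho,m)\,\partial_t m\Big)dx,
\end{equation*}
and then substitute the two evolution equations $\partial_t\rho=-\partial_x f_1(\rho,m)$ and $\partial_t m=-\partial_x f_2(\rho,m)+\partial_x(\mu(\rho)\partial_x\frac{\delta}{\delta m}\mathcal{E}(\rho,m))$ from \eqref{fgd}.

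Next I would integrate by parts in $x$; since $\Omega=\mathbb{T}^1$ is periodic, no boundary terms appear. The two flux contributions then collapse to
\begin{equation*}
\int_\Omega\Big(f_1(\rho,m)\,\partial_x\frac{\delta}{\delta\rho}\mathcal{E}(\rho,m)+f_2(\rho,m)\,\partial_x\frac{\delta}{\delta m}\mathcal{E}(\rho,m)\Big)dx,
\end{equation*}
which vanishes precisely by the assumed orthogonality constraint on $(f_1,f_2)$. For the remaining diffusion term, one further integration by parts gives
\begin{equation*}
\int_\Omega\frac{\delta}{\delta m}\mathcal{E}\,\partial_x\Big(\mu(\rho)\partial_x\frac{\delta}{\delta m}\mathcal{E}\Big)dx=-\int_\Omega\Big|\partial_x\frac{\delta}{\delta m}\mathcal{E}(\rho,m)\Big|^2\mu(\rho)\,dx,
\end{equation*}
which is exactly the claimed nonpositive right-hand side, establishing the Lyapunov property.

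For the particular case, I would invoke the previous proposition to compute $\frac{\delta}{\delta m}(\beta\mathcal{G})(\rho,m)=\beta\frac{m}{\rho}$, so the diffusion term on the right of the second equation of \eqref{fgd} equals $\beta\partial_x(\mu(\rho)\partial_x\frac{m}{\rho})$. Inserting $f_1=m$ and $f_2=\frac{m^2}{\rho}+P(\rho)$ then turns \eqref{fgd} into
\begin{equation*}
\left\{\begin{aligned}
&\partial_t\rho+\partial_x m=0,\\
&\partial_t m+\partial_x\Big(\frac{m^2}{\rho}+P(\rho)\Big)=\beta\partial_x\Big(\mu(\rho)\partial_x\frac{m}{\rho}\Big),
\end{aligned}\right.
\end{equation*}
which is exactly \eqref{BNS1}, i.e.\ the conservation-law form of \eqref{BNS}.

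The only point requiring genuine care is verifying that this specific flux satisfies the orthogonality constraint, so that the Lyapunov identity actually applies to the Navier--Stokes case. Using $\frac{\delta}{\delta\rho}\mathcal{G}=G_\rho=-\frac{m^2}{2\rho^2}+\hat P'(\rho)$ and $\frac{\delta}{\delta m}\mathcal{G}=G_m=\frac{m}{\rho}$, I would integrate the flux pairing by parts once more and recognize the integrand as the entropy--entropy flux identity $\partial_x\Psi=G_\rho\,\partial_x f_1+G_m\,\partial_x f_2$ derived in part (i); this reduces the constraint to $-\beta\int_\Omega\partial_x\Psi(\rho,m)\,dx=0$, which vanishes on the torus. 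This is the sole step that exploits the structure of the entropy pair rather than bare integration by parts, and it is the natural place where the entropy--entropy flux--metric condition enters; everything else is routine computation.
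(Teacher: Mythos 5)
Your proof is correct and follows essentially the paper's own method: the paper actually states this proposition without an attached proof, but its proofs of the two closest results (the Fisher-information dissipation corollary in Section 2 and the entropy dissipation computation for BNS in Section 3.1) proceed exactly as you do, via the functional chain rule, integration by parts on the torus, and the entropy--entropy flux identity collapsing the flux pairing to $\int_\Omega \partial_x\Psi\,dx=0$. Your final paragraph, checking that $f_1=m$, $f_2=\frac{m^2}{\rho}+P(\rho)$ satisfies the orthogonality constraint through $\partial_x\Psi=G_\rho\,\partial_x f_1+G_m\,\partial_x f_2$ with $G_\rho=-\frac{m^2}{2\rho^2}+\hat P'(\rho)$, $G_m=\frac{m}{\rho}$, is precisely the mechanism the paper uses when it derives $\frac{d}{dt}\mathcal{G}=-\beta\int_\Omega|\partial_x\frac{m}{\rho}|^2\mu(\rho)\,dx$, so your write-up correctly fills the gap the paper leaves implicit.
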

%\begin{remark}
%Connection with Carillo. AI sampling. 
%\end{remark}

\subsection{Controlling barotropic compressible Navier--Stokes equations}
In this subsection, we present the main result of this paper. We apply the above condition to formulate a variational problem for compressible Navier--Stokes equations. Its critical point system leads to a primal-dual PDE system. %for compressible Navier--Stokes equations. 
\begin{definition}[Optimal control of BNS]
\begin{subequations}\label{CNS}
Given smooth functionals $\mathcal{F}$, $\mathcal{H}\colon \mathcal{M}\rightarrow\mathbb{R}$, consider a variational problem
\begin{equation}\label{CNS1}
\inf_{\rho, m, a, \rho_1, m_1}~\int_0^1\Big[\int_\Omega \frac{1}{2}|a(t,x)|^2\mu(\rho(t,x)) dx-\mathcal{F}(\rho, m)(t)\Big]dt+\mathcal{H}(\rho_1,m_1),
\end{equation}
where the infimum is taken among variables $\rho\colon [0,1]\times \Omega\rightarrow\mathbb{R}_+$, $m\colon [0,1]\times\Omega\rightarrow\mathbb{R}$, $a\colon [0,1]\times \Omega\rightarrow\mathbb{R}$, and $\rho_1\colon\Omega\rightarrow\mathbb{R}_+$, $m_1\colon\Omega\rightarrow\mathbb{R}$ satisfying 
\begin{equation}\label{CBNS}
\left\{\begin{aligned}
&\partial_t\rho(t,x)+\partial_x m(t,x)=0,\\
&\partial_t m(t,x)+\partial_x( \frac{m^2}{\rho})(t,x)+\partial_xP(\rho)(t,x)\\
&\hspace{1.5cm}+\partial_x(\mu(\rho(t,x))a(t,x))=\beta\partial_x(\mu(\rho(t,x))\partial_x\frac{m(t,x)}{\rho(t,x)}),
\end{aligned}\right.
\end{equation}
with fixed initial time value conditions 
\begin{equation*}
\rho(0,x)=\rho_0(x), \qquad m(0,x)=m_0(x). 
\end{equation*}
Here $(\rho_0, m_0)$ is a given pair of functions in $\mathcal{M}$. 
\end{subequations}
\end{definition}
We next derive the critical point system of problem \eqref{CNS} and present its Hamiltonian formalism in metric space $(\mathcal{M}, \g)$. 
\begin{proposition}[Hamiltonian flows of BNS]\label{MFH}
The critical point system of variational problem \eqref{CNS} is given below. There exists a pair of functions $\phi\colon [0,1]\times\Omega\rightarrow\mathbb{R}$ and $\psi\colon[0,1]\times\Omega\rightarrow\mathbb{R}$, such that 
\begin{equation*}
a(t,x)=\partial_x\psi(t,x),
\end{equation*}
and
\begin{equation}\label{CNSeq}
\left\{\begin{aligned}
&\partial_t\rho+\partial_x m=0,\\
&\partial_t m+\partial_x( \frac{m^2}{\rho})+\partial_xP(\rho)+\partial_x(\mu(\rho)\partial_x\psi)=\beta\partial_x(\mu(\rho)\partial_x\frac{m}{\rho}),\\
&\partial_t\phi+\frac{1}{2}|\partial_x\psi|^2\mu'(\rho)-(\frac{m^2}{\rho^2}, \partial_x\psi)+(P'(\rho), \partial_x\psi)+\frac{\delta}{\delta\rho}\mathcal{F}(\rho, m)\\
&\hspace{3cm}=\beta(\partial_x\psi,\partial_x\frac{m}{\rho})\mu'(\rho)+\beta\frac{m}{\rho^2}\partial_x(\mu(\rho)\partial_x\psi),\\
&\partial_t\psi+2\partial_x\psi\cdot\frac{m}{\rho}+\partial_x\phi+\frac{\delta}{\delta m}\mathcal{F}(\rho, m)=-\beta\frac{1}{\rho}\partial_x(\mu(\rho)\partial_x\psi).
\end{aligned}\right.
\end{equation}
Here $'$ represents the derivative w.r.t. variable $\rho$. The initial and terminal time conditions satisfy
\begin{equation*}
\left\{\begin{aligned}
&\rho(0,x)=\rho_0(x), \\
& m(0,x)=m_0(x),\\
&\frac{\delta}{\delta \rho(1,x)}\mathcal{H}(\rho_1,m_1)+\phi(1,x)=0\\
&\frac{\delta}{\delta m(1,x)}\mathcal{H}(\rho_1,m_1)+\psi(1,x)=0.
\end{aligned}\right.
\end{equation*}
\end{proposition}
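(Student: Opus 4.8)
The plan is to treat the optimal control problem \eqref{CNS} as a constrained variational problem and to extract its critical point (saddle point) system by the Lagrange multiplier method, specializing the general computation behind \eqref{MFCeq} to the case $N=2$, $u=(\rho,m)$, flux $f=(m,\frac{m^2}{\rho}+P(\rho))$, and the degenerate barotropic Navier--Stokes metric in which only the momentum equation carries a control. Concretely, I would introduce two multiplier functions, $\phi$ for the continuity constraint and $\psi$ for the momentum constraint in \eqref{CBNS}, and form the Lagrangian
\begin{equation*}
\begin{split}
\mathcal{L}=&\int_0^1\Big[\int_\Omega\tfrac{1}{2}|a|^2\mu(\rho)\,dx-\mathcal{F}(\rho,m)\Big]dt+\mathcal{H}(\rho_1,m_1)+\int_0^1\int_\Omega\phi\big(\partial_t\rho+\partial_xm\big)dxdt\\
&+\int_0^1\int_\Omega\psi\Big(\partial_tm+\partial_x(\tfrac{m^2}{\rho})+\partial_xP(\rho)+\partial_x(\mu(\rho)a)-\beta\partial_x(\mu(\rho)\partial_x\tfrac{m}{\rho})\Big)dxdt.
\end{split}
\end{equation*}
The critical point system is then read off from the stationarity conditions $\frac{\delta}{\delta a}\mathcal{L}=\frac{\delta}{\delta\phi}\mathcal{L}=\frac{\delta}{\delta\psi}\mathcal{L}=\frac{\delta}{\delta\rho}\mathcal{L}=\frac{\delta}{\delta m}\mathcal{L}=0$, supplemented by stationarity in the free endpoints $\rho_1,m_1$.

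First I would dispatch the easy variations. Varying $a$, only the kinetic term $\tfrac12|a|^2\mu(\rho)$ and the control flux $\psi\partial_x(\mu(\rho)a)$ contribute; one spatial integration by parts turns the latter into $-\partial_x\psi\,\mu(\rho)a$, so that $\frac{\delta}{\delta a}\mathcal{L}=\mu(\rho)(a-\partial_x\psi)$, and since $\mu(\rho)>0$ we obtain $a=\partial_x\psi$. Varying $\phi$ and $\psi$ simply reproduces the two primal equations of \eqref{CBNS}, i.e. the first two lines of \eqref{CNSeq}.

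The two remaining variations, in $\rho$ and $m$, produce the adjoint equations and constitute the bulk of the work. Integrating $\phi\partial_t\rho$ and $\psi\partial_t m$ by parts in time yields $-\partial_t\phi$ and $-\partial_t\psi$ (plus terminal boundary terms collected below). The inviscid flux contributions follow from integration by parts in space against $\phi,\psi$: the $\rho$-variation of $\psi\partial_x(\frac{m^2}{\rho})$ and $\psi\partial_xP(\rho)$ produces the factors $\frac{m^2}{\rho^2}$ and $P'(\rho)$ paired with $\partial_x\psi$, the $m$-variation of $\psi\partial_x(\frac{m^2}{\rho})$ produces $\frac{2m}{\rho}\partial_x\psi$, and $\phi\partial_xm$ contributes $\partial_x\phi$ to the $m$-equation. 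The kinetic term gives $\tfrac12|a|^2\mu'(\rho)$ and the $\rho$-variation of the control flux gives $-\mu'(\rho)a\,\partial_x\psi$; after substituting $a=\partial_x\psi$ these combine to the single coefficient $\tfrac12|\partial_x\psi|^2\mu'(\rho)$, exactly the $\tfrac12\frac{\partial C}{\partial u_i}\partial_x\phi_j\partial_x\phi_k$ structure of \eqref{MFCeq}. Adding $\frac{\delta}{\delta\rho}\mathcal{F}$ and $\frac{\delta}{\delta m}\mathcal{F}$ and multiplying each stationarity identity by $-1$ recasts them into the third and fourth lines of \eqref{CNSeq}.

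The main obstacle is the $\rho$- and $m$-variation of the viscous term $\beta\int_0^1\!\int_\Omega\psi\,\partial_x(\mu(\rho)\partial_x\tfrac{m}{\rho})\,dxdt$, where $\rho$ appears simultaneously inside $\mu(\rho)$ and inside $\frac{m}{\rho}$ and two spatial derivatives are present, so the computation requires two integrations by parts with careful sign and product-rule bookkeeping. Varying $\mu(\rho)$ yields $\beta(\partial_x\psi,\partial_x\frac{m}{\rho})\mu'(\rho)$; varying the $\frac{m}{\rho}$ factor and integrating the derivatives back onto $\psi$ (using the self-adjointness of $\partial_x(\mu(\rho)\partial_x\cdot)$) yields $\beta\frac{m}{\rho^2}\partial_x(\mu(\rho)\partial_x\psi)$ in the $\phi$-equation, while the $m$-variation analogously yields $-\beta\frac1\rho\partial_x(\mu(\rho)\partial_x\psi)$ in the $\psi$-equation. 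I would isolate and verify this viscous contribution separately before assembling the full system. Finally, the terminal conditions come from the leftover boundary terms of the time integrations by parts at $t=1$: since $\rho(1,\cdot)=\rho_1$ and $m(1,\cdot)=m_1$ are free, stationarity in $\rho_1,m_1$ forces $\phi(1,x)+\frac{\delta}{\delta\rho_1}\mathcal{H}=0$ and $\psi(1,x)+\frac{\delta}{\delta m_1}\mathcal{H}=0$, whereas the fixed data at $t=0$ annihilates the initial boundary terms. Substituting $a=\partial_x\psi$ throughout then gives precisely \eqref{CNSeq}.
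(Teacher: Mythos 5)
Your derivation is correct and arrives at exactly the system \eqref{CNSeq}, by the same underlying strategy as the paper: a Lagrangian saddle-point formulation with multipliers $\phi,\psi$ for the two constraints in \eqref{CBNS}, integration by parts in time and space, and stationarity in all variables including the free endpoints $\rho_1,m_1$. The one genuine difference is that the paper first changes variables to $w=\mu(\rho)a$, so the constraint becomes linear in the control (it enters only as $\partial_x w$) and the running cost becomes $\int_\Omega\frac{|w|^2}{2\mu(\rho)}dx$; then the control-flux term $\psi\,\partial_x w$ carries no $\rho$-dependence, and the entire quadratic contribution to the $\rho$-variation comes from the single term $-\frac{|w|^2}{2\mu(\rho)^2}\mu'(\rho)=-\frac{1}{2}|\partial_x\psi|^2\mu'(\rho)$, with no cross-term cancellation needed. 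You instead vary $a$ directly, so your $\rho$-variation produces two coupled pieces, $\frac{1}{2}|a|^2\mu'(\rho)$ from the cost and $-\mu'(\rho)a\,\partial_x\psi$ from $\psi\,\partial_x(\mu(\rho)a)$, whose sum after setting $a=\partial_x\psi$ is $-\frac{1}{2}|\partial_x\psi|^2\mu'(\rho)$ in the stationarity identity; only after the overall multiplication by $-1$ does it become the $+\frac{1}{2}|\partial_x\psi|^2\mu'(\rho)$ appearing in \eqref{CNSeq}. Your prose reports the combined coefficient as $+\frac{1}{2}|\partial_x\psi|^2\mu'(\rho)$ already before the sign flip, so tighten that bookkeeping, but the plan and all other terms (including the viscous contributions $\beta(\partial_x\psi,\partial_x\frac{m}{\rho})\mu'(\rho)$, $\beta\frac{m}{\rho^2}\partial_x(\mu(\rho)\partial_x\psi)$, and $-\beta\frac{1}{\rho}\partial_x(\mu(\rho)\partial_x\psi)$) are right. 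As for what each route buys: the paper's substitution yields cleaner algebra and reflects the degenerate $H^{-1}$ metric structure in which $w$ is the natural momentum-flux variable, while your direct variation stays closer to the general template \eqref{MFCeq}, making explicit the cancellation between $\frac{1}{2}\partial_\rho C\, v^2$ and $\partial_\rho C\, v\,\partial_x\psi$ that is hidden in the paper's change of variables.
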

\begin{proof}
The proof follows the ideas in proving Proposition \ref{MFH} in \cite{LLO}. We present it here for the completeness of this paper. 
Consider a change of variable $w(t,x)=\mu(\rho(t,x))a(t,x)$. In this case, the variational problem \eqref{CNS} is written below.  
\begin{subequations}
\begin{equation}\label{CNS2}
\inf_{\rho, m, w, \rho_1, m_1}~\int_0^1\Big[\int_\Omega \frac{|w(t,x)|^2}{2\mu(\rho(t,x))} dx-\mathcal{F}(\rho, m)(t)\Big]dt+\mathcal{H}(\rho_1,m_1),
\end{equation}
where the infimum is taken among variables $\rho\colon [0,1]\times \Omega\rightarrow\mathbb{R}_+$, $m\colon [0,1]\times\Omega\rightarrow\mathbb{R}$, $w\colon [0,1]\times \Omega\rightarrow\mathbb{R}$, and $\rho_1\colon\Omega\rightarrow\mathbb{R}_+$, $m_1\colon\Omega\rightarrow\mathbb{R}$ satisfying 
\begin{equation}\label{CBNS2}
\left\{\begin{aligned}
&\partial_t\rho+\partial_x m=0,\\
&\partial_t m+\partial_x( \frac{m^2}{\rho})+\partial_xP(\rho)+\partial_x w=\beta\partial_x(\mu(\rho)\partial_x\frac{m}{\rho}),\\
&\rho(0,x)=\rho_0(x), \quad m(0,x)=m_0(x). 
\end{aligned}\right.
\end{equation}
%%with initial time value conditions 
\end{subequations}
We derive the critical point system \eqref{CNSeq} by solving a saddle point problem below. Denote $\phi$, $\Psi\colon [0,1]\times\Omega\rightarrow\mathbb{R}$ as a pair of functions, which are Lagrange multipliers for $\rho$, $m$ in dynamical constraints of $\eqref{CBNS2}$, respectively. Consider 
\begin{equation*}
\inf_{\rho, m, w, \rho_1, m_1}\sup_{\phi,\psi}~~\mathcal{L}(\rho, m, w, \rho_1,m_1, \phi,\psi), 
\end{equation*}
where we define a Lagrangian functional $\mathcal{L}$ as 
%\colon C^\infty(\Omega)^7\rightarrow\mathbb{R}$ as %by \textcolor{red}{extra 7?}
\begin{equation*}
\begin{split}
&\mathcal{L}(\rho, m, w, \rho_1,m_1, \phi,\psi)\\=&\quad\int_0^1\Big[\int_\Omega \frac{|w|^2}{2\mu(\rho)} dx-\mathcal{F}(\rho, m)\Big]dt+\mathcal{H}(\rho_1,m_1)\\
&+\int_0^1\int_\Omega \phi\Big(\partial_t\rho+\partial_x m\Big)dxdt\\
&+\int_0^1\int_\Omega\psi\Big(\partial_t m+\partial_x( \frac{m^2}{\rho})+\partial_xP(\rho)+\partial_x w-\beta\partial_x(\mu(\rho)\partial_x\frac{m}{\rho})\Big)dxdt\\
=&\quad\int_0^1\Big[\int_\Omega \frac{1}{2}\frac{|w|^2}{\mu(\rho)} dx-\mathcal{F}(\rho, m)\Big]dt+\mathcal{H}(\rho_1,m_1)\\
&+\int_0^1\int_\Omega \phi \partial_x m+\psi\Big(\partial_x( \frac{m^2}{\rho})+\partial_xP(\rho)+\partial_x w-\beta\partial_x(\mu(\rho)\partial_x\frac{m}{\rho})\Big)dxdt\\
&+\int_\Omega \Big(\phi_1\rho_1+\psi_1m_1\Big) dx-\int_0^1\int_\Omega \Big(\rho\partial_t\phi+m\partial_t\psi\Big) dxdt. 
\end{split}
\end{equation*}
We are now ready to derive the saddle point. Assume $\rho>0$. We let the $L^2$ first variations of $\mathcal{L}$ be zero. In detail, 
\begin{equation*}
\left\{\begin{aligned}
&\frac{\delta}{\delta w}\mathcal{L}=0\\
&\frac{\delta}{\delta \phi}\mathcal{L}=0\\
&\frac{\delta}{\delta \psi}\mathcal{L}=0\\
&\frac{\delta}{\delta\rho}\mathcal{L}=0\\
&\frac{\delta}{\delta m}\mathcal{L}=0\\
&\frac{\delta}{\delta \rho_1}\mathcal{L}=0\\
&\frac{\delta}{\delta m_1}\mathcal{L}=0
\end{aligned}\right.\quad\Rightarrow\quad\left\{\begin{aligned}
&\frac{w}{\mu(\rho)}-\partial_x\psi=0,\\
&\partial_t\rho+\partial_x m=0,\\
&\partial_t m+\partial_x( \frac{m^2}{\rho})+\partial_xP(\rho)+\partial_x w-\beta\partial_x(\mu(\rho)\partial_x\frac{m}{\rho})=0,\\
&-\frac{|w|^2}{2\mu(\rho)^2}\mu'(\rho)-\frac{\delta}{\delta \rho}\mathcal{F}(\rho, m)+(\frac{m^2}{\rho^2}, \partial_x\psi)-(\partial_x\psi, P'(\rho))\\
&+\beta(\partial_x\psi, \partial_x\frac{m}{\rho})\mu'(\rho)+\beta\frac{m}{\rho^2}\partial_x(\mu(\rho)\partial_x\psi)-\partial_t\phi=0,\\
&-\frac{\delta}{\delta m}\mathcal{F}(\rho, m)-\partial_x\phi-(\frac{2m}{\rho}, \partial_x\psi)-\frac{\beta}{\rho}\partial_x(\mu(\rho)\partial_x\psi)-\partial_t\psi=0,\\
&\frac{\delta}{\delta\rho_1}\mathcal{H}(\rho_1, m_1)+\phi_1=0,\\
&\frac{\delta}{\delta m_1}\mathcal{H}(\rho_1, m_1)+\psi_1=0.
\end{aligned}\right.
\end{equation*}
In above formulations, we further use the fact that $\frac{\omega}{\mu(\rho)}=a=\partial_x\psi$. Hence we derive the critical point system \eqref{CNSeq}. 
\end{proof}
\begin{proposition}[Hamiltonian formalisms]
The PDE system \eqref{CNSeq} has the following Hamiltonian flow formulation. 
\begin{equation*}
\left\{\begin{aligned}
\partial_t\rho=&\frac{\delta}{\delta\phi}\mathcal{H}_{\mathcal{G}}(\rho,m, \phi,\psi),\\
\partial_tm=&\frac{\delta}{\delta\psi}\mathcal{H}_{\mathcal{G}}(\rho,m, \phi,\psi),\\
 \partial_t\phi=&-\frac{\delta}{\delta \rho}\mathcal{H}_{\mathcal{G}}(\rho,m, \phi,\psi),\\
  \partial_t\psi=&-\frac{\delta}{\delta m}\mathcal{H}_{\mathcal{G}}(\rho,m, \phi,\psi),\\
\end{aligned}\right.
\end{equation*}
where we define a Hamiltonian functional $\mathcal{H}_{\mathcal{G}}$ as 
%\colon \mathcal{M}\times C^{\infty}(\Omega)\rightarrow\mathbb{R}$ as
\begin{equation}\label{eq:hamiltonian_functional}
\begin{split}
&\mathcal{H}_{\mathcal{G}}(\rho,m, \phi, \psi)\\
=&\int_\Omega \Big[\frac{1}{2}(\partial_x\psi,\partial_x\psi)\mu(\rho)+(m, \partial_x\phi)+(\frac{m^2}{\rho}+P(\rho), \partial_x\psi)-\beta (\partial_x\psi, \partial_x\frac{m}{\rho})\mu(\rho) \Big]dx+\mathcal{F}(\rho, m).
\end{split}
\end{equation}
\end{proposition}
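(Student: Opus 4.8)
The statement asserts that the primal--dual system \eqref{CNSeq} is the Hamiltonian flow of $\mathcal{H}_{\mathcal{G}}$, so the plan is simply to compute the four $L^2$ functional derivatives $\frac{\delta}{\delta\phi}\mathcal{H}_{\mathcal{G}}$, $\frac{\delta}{\delta\psi}\mathcal{H}_{\mathcal{G}}$, $\frac{\delta}{\delta\rho}\mathcal{H}_{\mathcal{G}}$, $\frac{\delta}{\delta m}\mathcal{H}_{\mathcal{G}}$ and match them, line by line, with the right-hand sides of \eqref{CNSeq}. It is worth noting in advance that the first two derivatives should reproduce the two state equations (continuity and momentum), while the last two should reproduce the two costate equations already obtained as the $\rho$- and $m$-stationarity conditions in the proof of the previous proposition; so the content is really a reorganization of that saddle-point system into Hamiltonian form.

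First I would dispatch the two easy derivatives. The only $\phi$-dependent term of $\mathcal{H}_{\mathcal{G}}$ is $\int_\Omega (m,\partial_x\phi)\,dx$, so a single integration by parts gives $\frac{\delta}{\delta\phi}\mathcal{H}_{\mathcal{G}}=-\partial_x m$, which is exactly $\partial_t\rho$. For $\frac{\delta}{\delta\psi}\mathcal{H}_{\mathcal{G}}$ I would collect the three $\psi$-dependent terms $\tfrac12(\partial_x\psi)^2\mu(\rho)$, $(\tfrac{m^2}{\rho}+P(\rho))\partial_x\psi$ and $-\beta(\partial_x\psi,\partial_x\tfrac{m}{\rho})\mu(\rho)$; integrating each once by parts in $x$ yields $-\partial_x(\mu(\rho)\partial_x\psi)-\partial_x(\tfrac{m^2}{\rho}+P(\rho))+\beta\partial_x(\mu(\rho)\partial_x\tfrac{m}{\rho})$, which matches $\partial_t m$ in the momentum equation. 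The substance is then the two costate derivatives. For $\frac{\delta}{\delta\rho}\mathcal{H}_{\mathcal{G}}$ the first two terms contribute the elementary pieces $\tfrac12(\partial_x\psi)^2\mu'(\rho)$ and $(-\tfrac{m^2}{\rho^2}+P'(\rho))\partial_x\psi$, and $\mathcal{F}$ contributes $\frac{\delta}{\delta\rho}\mathcal{F}$; negating the whole thing should give $\partial_t\phi$.

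I expect the main obstacle to be the viscous term $-\beta\int_\Omega(\partial_x\psi,\partial_x\tfrac{m}{\rho})\mu(\rho)\,dx$, in which $\rho$ appears simultaneously inside $\mu(\rho)$ and inside $m/\rho$. Its $\rho$-variation must be taken by the product rule: the $\mu'(\rho)$ factor yields $-\beta(\partial_x\psi,\partial_x\tfrac{m}{\rho})\mu'(\rho)$ immediately, whereas the variation of $\partial_x\tfrac{m}{\rho}$, namely $\partial_x(-\tfrac{m}{\rho^2}\,\delta\rho)$, has to be integrated by parts so as to move the $\partial_x$ off the perturbation, producing $-\beta\tfrac{m}{\rho^2}\partial_x(\mu(\rho)\partial_x\psi)$; these two pieces are exactly the right-hand side of the third equation of \eqref{CNSeq}. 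The $m$-variation of the same term is the milder analogue, using $\partial_m(m/\rho)=1/\rho$ and one integration by parts to get $\tfrac{\beta}{\rho}\partial_x(\mu(\rho)\partial_x\psi)$, which together with $\partial_x\phi$, $\tfrac{2m}{\rho}\partial_x\psi$ and $\frac{\delta}{\delta m}\mathcal{F}$ reproduces $-\partial_t\psi$. All boundary terms from these integrations by parts vanish because $\Omega=\mathbb{T}^1$ is periodic, so no extra hypotheses are needed. As a clean cross-check that avoids this bookkeeping entirely, one may instead substitute $w=\mu(\rho)\partial_x\psi$ back into the Lagrangian $\mathcal{L}$ of the previous proof and integrate its time derivatives by parts; the control term collapses via $\tfrac12\mu(\partial_x\psi)^2-\mu(\partial_x\psi)^2=-\tfrac12\mu(\partial_x\psi)^2$, and $\mathcal{L}$ then takes the form of boundary contributions minus $\int_0^1\big[\int_\Omega(\rho\,\partial_t\phi+m\,\partial_t\psi)\,dx+\mathcal{H}_{\mathcal{G}}\big]\,dt$, whose stationarity conditions are precisely the four Hamilton equations.
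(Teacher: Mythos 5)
Your proposal is correct and follows essentially the same route as the paper: the paper's own justification (given in the proof of the subsequent Hamilton--Jacobi proposition) is precisely this direct computation of the four $L^2$ first variations of $\mathcal{H}_{\mathcal{G}}$, matched term by term against \eqref{CNSeq}, with the viscous term handled by the product rule and one integration by parts exactly as you describe. Your computed $\frac{\delta}{\delta\rho}\mathcal{H}_{\mathcal{G}}$ is in fact the corrected version of the paper's displayed formula, which contains a typo ($\partial_x\frac{m}{\rho^2}$ where it should read $\partial_x\frac{m}{\rho}$ in the $\mu'(\rho)$ term), and your Lagrangian cross-check is a sound, if optional, addition.
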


\begin{proposition}[Functional Hamilton-Jacobi equation of BNS]
The Hamilton-Jacobi equation in $(\mathcal{M}, \g)$ satisfies
\begin{equation*}
\begin{split}
&\partial_t\mathcal{U}(t,\rho, m)+\frac{1}{2}\int_\Omega\big(\partial_x\frac{\delta}{\delta m(x)}\mathcal{U}(t,\rho, m), \partial_x\frac{\delta}{\delta m(x)}\mathcal{U}(t,\rho,m)\big)\mu(\rho(x))dx\\
&\hspace{1.9cm}+\int_\Omega\big(\partial_x\frac{\delta}{\delta \rho(x)}\mathcal{U}(t,\rho, m), m(x)\big)dx+\int_\Omega\big(\partial_x\frac{\delta}{\delta m(x)}\mathcal{U}(t,\rho, m), \frac{m(x)^2}{\rho(x)}+P(\rho(x))\big)dx\\
&\hspace{1.9cm}-\beta \int_\Omega(\partial_x\frac{\delta}{\delta m(x)}\mathcal{U}(t,\rho, m), \partial_x\frac{m(x)}{\rho(x)})\mu(\rho(x))dx+\mathcal{F}(\rho, m)=0,
\end{split}
\end{equation*}
where $\mathcal{U}\colon [0,1]\times L^2(\Omega)\times L^2(\Omega)\rightarrow\mathbb{R}$ is a value functional. 
\end{proposition}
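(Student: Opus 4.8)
The plan is to identify the displayed equation as the infinitesimal Hamilton--Jacobi equation attached to the control problem \eqref{CNS}, so that it reads simply as $\partial_t\mathcal{U}+\mathcal{H}_{\mathcal{G}}(\rho,m,\tfrac{\delta}{\delta\rho}\mathcal{U},\tfrac{\delta}{\delta m}\mathcal{U})=0$ with the Hamiltonian functional $\mathcal{H}_{\mathcal{G}}$ of \eqref{eq:hamiltonian_functional}. First I would define the value functional $\mathcal{U}(t,\rho,m)$ as the \emph{cost-to-come}, i.e.\ the optimal accumulated running cost $\int_0^t[\tfrac12\int_\Omega|a|^2\mu(\rho)\,dx-\mathcal{F}]\,ds$ over all admissible controls and trajectories of \eqref{CBNS} that start from the fixed data $(\rho_0,m_0)$ and reach the state $(\rho,m)$ at time $t$. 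With this forward orientation the full problem factors as $\inf_{\rho_1,m_1}\{\mathcal{U}(1,\rho_1,m_1)+\mathcal{H}(\rho_1,m_1)\}$, whose first-order condition reproduces the terminal relation $\psi(1,\cdot)=\tfrac{\delta}{\delta m}\mathcal{U}(1,\cdot)=-\tfrac{\delta}{\delta m}\mathcal{H}$ and, along optimal curves, the costate identifications $\phi=\tfrac{\delta}{\delta\rho}\mathcal{U}$, $\psi=\tfrac{\delta}{\delta m}\mathcal{U}$ of the preceding Hamiltonian-flow proposition.

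Next I would write the dynamic programming principle on a short interval $[t-h,t]$ and expand to first order in $h$. Using the functional chain rule
\[
\frac{d}{dt}\mathcal{U}=\partial_t\mathcal{U}+\int_\Omega\frac{\delta\mathcal{U}}{\delta\rho}\,\partial_t\rho\,dx+\int_\Omega\frac{\delta\mathcal{U}}{\delta m}\,\partial_t m\,dx,
\]
I would insert the constraint \eqref{CBNS} for $\partial_t\rho$ and $\partial_t m$ and integrate by parts on the torus $\Omega$, moving every $x$-derivative off the fluxes and onto $\tfrac{\delta}{\delta\rho}\mathcal{U}$ and $\tfrac{\delta}{\delta m}\mathcal{U}$. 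This turns the optimality relation into
\[
\partial_t\mathcal{U}=\inf_a\Big\{\tfrac12\int_\Omega|a|^2\mu(\rho)\,dx-\int_\Omega\partial_x\tfrac{\delta\mathcal{U}}{\delta m}\,\mu(\rho)\,a\,dx-\mathcal{R}(\rho,m,\tfrac{\delta\mathcal{U}}{\delta\rho},\tfrac{\delta\mathcal{U}}{\delta m})\Big\},
\]
where $\mathcal{R}$ collects the transport, pressure and viscous contributions together with $\mathcal{F}$.

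The decisive step is the pointwise Legendre transform in the control. Minimizing the quadratic-plus-linear functional of $a$ gives the optimal feedback $a^\star=\partial_x\tfrac{\delta}{\delta m}\mathcal{U}$ (consistent with $a=\partial_x\psi$, $\psi=\tfrac{\delta}{\delta m}\mathcal{U}$), and substituting $a^\star$ collapses the two $a$-terms into the single kinetic contribution $-\tfrac12\int_\Omega|\partial_x\tfrac{\delta}{\delta m}\mathcal{U}|^2\mu(\rho)\,dx$. Moving every remaining term to the left-hand side---which restores the positive signs of the kinetic and $\mathcal{F}$ terms and the negative sign of the viscous term---yields exactly the displayed equation, which by \eqref{eq:hamiltonian_functional} is $\partial_t\mathcal{U}+\mathcal{H}_{\mathcal{G}}(\rho,m,\tfrac{\delta}{\delta\rho}\mathcal{U},\tfrac{\delta}{\delta m}\mathcal{U})=0$.

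The main obstacle is twofold. First, the sign bookkeeping: it is the cost-to-come (forward) value functional, rather than the cost-to-go, that produces the displayed signs of the kinetic and $\mathcal{F}$ terms and that makes the costates equal to $+\tfrac{\delta}{\delta\rho}\mathcal{U}$, $+\tfrac{\delta}{\delta m}\mathcal{U}$ rather than their negatives; choosing the wrong orientation flips both and breaks the match with \eqref{eq:hamiltonian_functional}. Second, the derivation is formal in infinite dimensions: a rigorous statement needs existence of $\mathcal{U}$ with enough regularity to justify the functional chain rule and the exchange of the infimum with the limit $h\to0$. As in the companion identities above and in \cite{LLO}, I would carry the computation out at the formal level, since the Legendre duality between $a$ and $\partial_x\psi$ and the Hamiltonian $\mathcal{H}_{\mathcal{G}}$ are already available.
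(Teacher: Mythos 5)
Your proof is correct, and it takes a genuinely different route from the paper's. The paper does not run a dynamic programming argument at all: its proof consists of (i) computing the $L^2$ first variations of the Hamiltonian functional $\mathcal{H}_{\mathcal{G}}$ in \eqref{eq:hamiltonian_functional} with respect to $(\rho,m,\phi,\psi)$, thereby verifying that the primal--dual system \eqref{CNSeq} is the Hamiltonian flow of $\mathcal{H}_{\mathcal{G}}$, and then (ii) invoking the canonical form of a Hamilton--Jacobi equation in the metric space, $\partial_t\mathcal{U}(t,\rho,m)+\mathcal{H}_{\mathcal{G}}\big(\rho,m,\frac{\delta}{\delta\rho}\mathcal{U},\frac{\delta}{\delta m}\mathcal{U}\big)=0$, so that the displayed equation follows by pure substitution of the costates $(\phi,\psi)$ by $\big(\frac{\delta}{\delta\rho}\mathcal{U},\frac{\delta}{\delta m}\mathcal{U}\big)$, with the control-theoretic justification deferred to \cite{LLO}. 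You instead derive the equation from the dynamic programming principle for the forward (cost-to-come) value functional of the control problem \eqref{CNS}: functional chain rule, integration by parts on the torus, and the pointwise Legendre transform $\inf_a\{\frac{1}{2}\int_\Omega|a|^2\mu(\rho)dx-\int_\Omega \partial_x\frac{\delta\mathcal{U}}{\delta m}\,\mu(\rho)\,a\,dx\}=-\frac{1}{2}\int_\Omega|\partial_x\frac{\delta\mathcal{U}}{\delta m}|^2\mu(\rho)dx$ attained at $a^\star=\partial_x\frac{\delta\mathcal{U}}{\delta m}$. Your sign bookkeeping checks out: it is precisely the forward orientation that produces $+\frac{1}{2}|\cdot|^2$, $+\mathcal{F}$, $-\beta(\cdots)$ and the identifications $\phi=+\frac{\delta}{\delta\rho}\mathcal{U}$, $\psi=+\frac{\delta}{\delta m}\mathcal{U}$, consistent with the terminal conditions $\phi(1,\cdot)=-\frac{\delta}{\delta\rho}\mathcal{H}$, $\psi(1,\cdot)=-\frac{\delta}{\delta m}\mathcal{H}$ of the Hamiltonian-flow proposition, whereas a cost-to-go ansatz would flip the kinetic and $\mathcal{F}$ terms and break the match with \eqref{eq:hamiltonian_functional}. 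What your route buys is an actual proof that the displayed equation is the HJ equation of the specific control problem \eqref{CNS}, pinning down the orientation that the paper leaves implicit; what the paper's route buys is brevity and reuse, since the variational derivatives of $\mathcal{H}_{\mathcal{G}}$ are needed anyway for the Hamiltonian formalism and the HJ equation is then a one-line formal consequence. Both arguments are formal in infinite dimensions, so your regularity caveat (existence and smoothness of $\mathcal{U}$, exchanging the infimum with the limit $h\to 0$) applies equally to each.
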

\begin{proof}
We only need to prove that equation is an Hamiltonian flow in $(\mathcal{M}, \g)$. We can check it directly by computing the $L^2$ first order variations of the Hamiltonian functional $\mathcal{H}_{\mathcal{G}}$ w.r.t. variables $\rho, m, \phi, \psi$, respectively. Clearly, 
\begin{equation*}
\left\{\begin{aligned}
&\frac{\delta}{\delta\phi}\mathcal{H}_{\mathcal{G}}(\rho,m,\phi,\psi)=-\partial_x m,\\
&\frac{\delta}{\delta\psi}\mathcal{H}_{\mathcal{G}}(\rho,m,\phi,\psi)=-\partial_x(\frac{m^2}{\rho}+P(\rho))-\beta\partial_x(\mu(\rho)\partial_x\psi)+\beta\partial_x(\mu(\rho)\partial_x\frac{m}{\rho}),\\
&\frac{\delta}{\delta\rho}\mathcal{H}_{\mathcal{G}}(\rho,m,\phi,\psi)=-\frac{m^2}{\rho^2}\partial_x\psi+P'(\rho)\partial_x\psi+\frac{1}{2}|\partial_x\psi|^2\mu'(\rho)-\beta\partial_x(\mu(\rho)\partial_x\psi)\frac{m}{\rho^2}\\
&\hspace{4.8cm}-\beta(\partial_x\psi, \partial_x\frac{m}{\rho^2})\mu'(\rho)+\frac{\delta}{\delta\rho}\mathcal{F}(\rho, m),\\
&\frac{\delta}{\delta m}\mathcal{H}_{\mathcal{G}}(\rho, m,\phi,\psi)=\partial_x\phi+\frac{2m}{\rho}\partial_x\psi+\frac{\beta}{\rho}\partial_x(\mu(\rho)\partial_x\psi)+\frac{\delta}{\delta m}\mathcal{F}(\rho, m).
\end{aligned}\right.
\end{equation*}
In addition, the Hamilton-Jacobi equation in $(\mathcal{M}, \g)$ satisfies 
\begin{equation*}
\partial_t\mathcal{U}(t,\rho, m)+\mathcal{H}_{\mathcal{G}}(\rho, m, \frac{\delta}{\delta \rho}\mathcal{U}(t,\rho, m), \frac{\delta}{\delta m}\mathcal{U}(t,\rho, m))=0,
\end{equation*}
where $\frac{\delta}{\delta\rho}$, $\frac{\delta}{\delta m}$ are first variation operators w.r.t. $\rho$, $m$, respectively. This finishes the derivation. 
\end{proof}
\subsection{Examples}
In this subsection, we present several examples of control problems of BNS \eqref{CNS} and the primal--dual BNS \eqref{CNSeq}. 
\begin{example}[$\alpha=1$]
Consider $\mu(\rho)=\rho$. In this case, variational problem \eqref{CNS} forms 
\begin{equation*}
\inf_{\rho, m, a, \rho_1, m_1}~\int_0^1\Big[\int_\Omega \frac{1}{2}|a(t,x)|^2\rho(t,x)dx-\mathcal{F}(\rho, m)(t)\Big]dt+\mathcal{H}(\rho_1,m_1),
\end{equation*}
s.t.
\begin{equation*}
\left\{\begin{aligned}
&\partial_t\rho+\partial_x m=0,\\
&\partial_t m+\partial_x( \frac{m^2}{\rho})+\partial_xP(\rho)+\partial_x(\rho a)=\beta\partial_x(\rho\partial_x\frac{m}{\rho}),\\
&\rho(0,x)=\rho_0(x),~~m(0,x)=m_0(x). 
\end{aligned}\right.
\end{equation*}
The critical point system of above minimizer problem satisfies 
\begin{equation*}
\left\{\begin{aligned}
&\partial_t\rho+\partial_x m=0,\\
&\partial_t m+\partial_x( \frac{m^2}{\rho})+\partial_xP(\rho)+\partial_x(\rho\partial_x\psi)=\beta\partial_x(\rho\partial_x\frac{m}{\rho}),\\
&\partial_t\phi+\frac{1}{2}|\partial_x\psi|^2-(\frac{m^2}{\rho^2}, \partial_x\psi)+(P'(\rho), \partial_x\psi)+\frac{\delta}{\delta\rho}\mathcal{F}(\rho, m)=\beta(\partial_x\psi,\partial_x\frac{m}{\rho})+\beta\frac{m}{\rho^2}\partial_x(\rho\partial_x\psi),\\
&\partial_t\psi+2\partial_x\psi\cdot\frac{m}{\rho}+\partial_x\phi+\frac{\delta}{\delta m}\mathcal{F}(\rho, m)=-\beta\frac{1}{\rho}\partial_x(\rho\partial_x\psi).
\end{aligned}\right.
\end{equation*}
 In other words, 
 \begin{equation*}
 \partial_t\rho=\frac{\delta}{\delta\phi}\mathcal{H}_{\mathcal{G}},~~ 
\partial_tm=\frac{\delta}{\delta\psi}\mathcal{H}_{\mathcal{G}},~~
 \partial_t\phi=-\frac{\delta}{\delta \rho}\mathcal{H}_{\mathcal{G}},~~
  \partial_t\psi=-\frac{\delta}{\delta m}\mathcal{H}_{\mathcal{G}},
 \end{equation*} 
 where the Hamiltonian functional $\mathcal{H}_{\mathcal{G}}$ satisfies  
\begin{equation*}\begin{split}
&\mathcal{H}_{\mathcal{G}}(\rho,m, \phi, \psi)\\
=&\int_\Omega \Big[\frac{1}{2}(\partial_x\psi,\partial_x\psi)\rho+(m, \partial_x\phi)+(\frac{m^2}{\rho}+P(\rho), \partial_x\psi)-\beta (\partial_x\psi, \partial_x\frac{m}{\rho})\rho\Big]dx+\mathcal{F}(\rho, m).
\end{split}
\end{equation*}
\end{example}
\begin{example}[$\alpha=0$]
Consider $\mu(\rho)=1$. In this case, variational problem \eqref{CNS} forms 
\begin{equation*}
\inf_{\rho, m, a, \rho_1, m_1}~\int_0^1\Big[\int_\Omega \frac{1}{2}|a(t,x)|^2dx-\mathcal{F}(\rho, m)(t)\Big]dt+\mathcal{H}(\rho_1,m_1),
\end{equation*}
s.t.
\begin{equation*}
\left\{\begin{aligned}
&\partial_t\rho+\partial_x m=0,\\
&\partial_t m+\partial_x( \frac{m^2}{\rho})+\partial_xP(\rho)+\partial_x a=\beta\partial_x(\partial_x\frac{m}{\rho}),\\
&\rho(0,x)=\rho_0(x),~~m(0,x)=m_0(x). 
\end{aligned}\right.
\end{equation*}
The critical point system of above variational problem satisfies 
\begin{equation*}\label{CNSeq}
\left\{\begin{aligned}
&\partial_t\rho+\partial_x m=0,\\
&\partial_t m+\partial_x( \frac{m^2}{\rho})+\partial_xP(\rho)+\partial_x(\partial_x\psi)=\beta\partial_x(\partial_x\frac{m}{\rho}),\\
&\partial_t\phi-(\frac{m^2}{\rho^2}, \partial_x\psi)+(P'(\rho), \partial_x\psi)+\frac{\delta}{\delta\rho}\mathcal{F}(\rho, m)=\beta\frac{m}{\rho^2}\partial_x(\partial_x\psi),\\
&\partial_t\psi+2\partial_x\psi\cdot\frac{m}{\rho}+\partial_x\phi+\frac{\delta}{\delta m}\mathcal{F}(\rho, m)=-\beta\frac{1}{\rho}\partial_x(\partial_x\psi).
\end{aligned}\right.
\end{equation*}
 In other words, 
 \begin{equation*}
 \partial_t\rho=\frac{\delta}{\delta\phi}\mathcal{H}_{\mathcal{G}},~~ 
\partial_tm=\frac{\delta}{\delta\psi}\mathcal{H}_{\mathcal{G}},~~
 \partial_t\phi=-\frac{\delta}{\delta \rho}\mathcal{H}_{\mathcal{G}},~~
  \partial_t\psi=-\frac{\delta}{\delta m}\mathcal{H}_{\mathcal{G}},
 \end{equation*} 
 where the Hamiltonian functional $\mathcal{H}_{\mathcal{G}}$ satisfies  
\begin{equation*}\begin{split}
&\mathcal{H}_{\mathcal{G}}(\rho,m, \phi, \psi)\\
=&\int_\Omega \Big[\frac{1}{2}(\partial_x\psi,\partial_x\psi)+(m, \partial_x\phi)+(\frac{m^2}{\rho}+P(\rho), \partial_x\psi)-\beta (\partial_x\psi, \partial_x\frac{m}{\rho})\Big]dx+\mathcal{F}(\rho, m).
\end{split}
\end{equation*}
\end{example}

\section{Numerical methods and examples}\label{section4}
This section designs numerical schemes for optimal control of barotropic compressible Euler equations in $1$D. It proposes an algorithm inspired by the primal-dual hybrid gradient method (PDHG) to solve the control problem.

\subsection{The PDHG inspired algorithm}
The primal-dual hybrid gradient algorithm \cite{champock11} solves the saddle-point problem 
\begin{align*}
\min_z \max_p \langle Kz,p\rangle_{L^2} + g(z) -h^*(p),
\end{align*}
where $\mathcal{Z}$ is a finite or infinite dimensional Hilbert space, $h$ and $g$ are convex functions and $K:\mathcal{Z}\rightarrow \mathcal{H}$ is a linear operator between Hilbert spaces. The function $h^*$ is the convex conjugate of $h$, where $h^*(p) = \sup_z \langle Kz,p\rangle_{L^2} -h(z)$. 
The algorithm solves the saddle-point problem by iterating the following steps:
\begin{align*}
% \label{alg:pdhg}
z^{n+1} &= \argmin_{z}   \langle Kz,\Tilde{p}^n\rangle_{L^2}  + g(z) +  \frac{1}{2 \tau} \| z - z^n\|^2_{L^2},\\
p^{n+1} &= \argmax_{p}   \langle Kz^{n+1},p\rangle_{L^2}  -h^*(p) -\frac{1}{2 \sigma} \| p - p^n\|^2_{L^2},\\
\Tilde{p}^{n+1} &= 2p^{n+1} - p^{n}.
\end{align*}
Here $\tau$($\sigma$) is the stepsize for proximal gradient descent(ascent) steps respectively. The algorithm converges if $\sigma \tau \|K^T K\|<1 $. There are various extenstions of PDHG, including nonlinear PDHG \cite{clason2017primal} where the operator $K$ is nonlinear and the General-proximal Primal-Dual Hybrid Gradient (G-prox PDHG) method \cite{JacobsLegerLiOsher2018_solvinga} where choosing proper norms ($L^2, H^1, ...$) for the proximal step allows larger stepsizes.

% When $K$ is nonlinear, denote $K(z) \approx K(\bar{z}) + \nabla K(\bar{z})(z - \bar{z})$. The nonlinear PDHG \cite{clason2017primal} contains the following steps:
% \begin{equation}\label{alg:pdhg_nonlinear}
% \begin{aligned}
% z^{n+1} & = \argmin_{z}   \langle z,[\nabla K(z^n)]^T\Tilde{p}^n\rangle_{L^2}  + g(z) +  \frac{1}{2 \tau} \| z -  z^n\|^2_{L^2},\\
% p^{n+1} & = \argmax_{p}   \langle K(z^{n+1}),p\rangle_{L^2}  -h^*(p) - \frac{1}{2 \sigma} \| p - p^n\|^2_{L^2},\\
% \Tilde{p}^{n+1} &= 2p^{n+1} - p^{n}.
% \end{aligned}
% \end{equation}	
% The stepsizes need to satisfy $\sigma \tau \|\nabla K(z^n)\|<1 $ in order to guarantee the scheme converges.

Inspired by the PDHG method and its variants, we use the saddle point formulation of the optimal control of BNS \eqref{CNS} and propose an algorithm to solve it. Denote
	\begin{align*}
z & = (\rho,m,a,\rho_1,m_1),\\
p & = (\phi,\psi),\\
K\left(\rho,m,a,\rho_1,m_1\right) & = \begin{pmatrix}
\partial_t\rho+\partial_x m\\
\partial_t m+\partial_x( \frac{m^2}{\rho})+\partial_xP(\rho) +\partial_x(\mu(\rho)a) - \beta\partial_x(\mu(\rho))\partial_x\frac{m}{\rho})
\end{pmatrix} , \\
g\left(\rho,m,a,\rho_1,m_1\right) & = \int_0^1\Big[\int_\Omega \frac{1}{2}|a(t,x)|^2\mu(\rho(t,x)) dx-\mathcal{F}(\rho, m)(t)\Big]dt+\mathcal{H}(\rho_1,m_1),\\
h(Kz) & = \begin{cases}
0 \quad \text{if} \;Kz = 0\\
+\infty \quad \text{else}
\end{cases}.
\end{align*}
The corresponding inf-sup problem takes the following form

\begin{equation}\label{eqn:minmax0}
	\inf_{\rho,m,a,\rho_1,m_1} \sup_{\phi,\psi}~~\mathcal{L}(\rho,m,a,\rho_1,m_1,\phi,\psi),%\quad \text{ boundary conditions},
	\end{equation}
	subject to
	\begin{equation*}
\left\{\begin{aligned}
&\rho(0,x)=\rho_0(x), \quad
 m(0,x)=m_0(x),\\
&\frac{\delta}{\delta \rho(1,x)}\mathcal{H}(\rho_1,m_1)+\phi(1,x)=0,\quad
\frac{\delta}{\delta m(1,x)}\mathcal{H}(\rho_1,m_1)+\psi(1,x)=0,
\end{aligned}\right.	    
\end{equation*}
where
	\begin{equation}\label{eq:L_pdhg_cont}
	\begin{aligned}{}
&	\mathcal{L}(\rho,m,a,\rho_1,m_1,\phi,\psi) \\ =&\quad\int_0^1\Big[\int_\Omega \frac{1}{2}|a(t,x)|^2\mu(\rho(t,x)) dx-\mathcal{F}(\rho, m)(t)\Big]dt+\mathcal{H}(\rho_1,m_1) \\
&	+ \int_0^1\int_{\Omega} \phi \left( \partial_t\rho+\partial_x m\right)  dxdt\\
&+ \int_0^1\int_\Omega\psi \left(\partial_t m+\partial_x( \frac{m^2}{\rho})+\partial_x P(\rho) +\partial_x(\mu(\rho)a) - \beta\partial_x(\mu(\rho)\partial_x\frac{m}{\rho})\right)  dx dt.
		\end{aligned}
\end{equation}

% We also adapt the General-proximal Primal-Dual Hybrid Gradient (G-prox PDHG) method from \cite{JacobsLegerLiOsher2018_solvinga} by choosing proper norms ($L^2, H^1, ...$) for the proximal step to allow larger stepsizes. Specifically, we also choose $L^2$ norm for primal variable update and $H^2$ norm for $\phi,\psi$, where
We choose $L^2$ norm for primal variable $(\rho,m,a)$ update and $H$ norm for $(\phi,\psi)$, where
\begin{align*}
    \|v\|^2_{L^2} =\int_0^1 \int_{\Omega}v^2 dxdt,\quad \|v\|^2_{H} =  c_1\|\nabla v\|^2_{L^2} + c_2\|\Delta v\|^2_{L^2} + c_3\|\partial_t v\|^2_{L^2}. 
\end{align*}
Here the parameters $c_i, i= 1,2,3$ are chosen based on the operator $K$. 
%In the case of the $K$ contains both nonliearity and differential operator, we approximate $\|v\|^2_{H_1^2} = \|\partial_t v \|^2 + \| Av\|^2, A = \ $  
%The choice of stepsizes $\sigma,\tau$ are tuned 

We now present the algorithm as follows.

\begin{algorithm}
\caption{Algorithm 1: PDHG for optimal control of BNS
}\label{alg:short}
\begin{flushleft}
    \hspace*{\algorithmicindent} \textbf{Input:} A set of initial guess of $(\rho,m,a,\rho_1,m_1,\phi,\psi)$\\
    \hspace*{\algorithmicindent} \textbf{Output:} $(\rho,m,a,\rho_1,m_1)$
    \end{flushleft}
\begin{algorithmic}
    \While {iteration  $k<\mathcal{K}_{\text{maximal}}$}
    \State{ \begin{align*}
        &\left(\rho^{(k+1)},m^{(k+1)},a^{(k+1)},\rho_1^{(k+1)},m_1^{(k+1)}\right) \\&= \argmin\limits_{\rho,m,a,\rho_1,m_1}~~ \mathcal{L}(\rho,m,a,\rho_1,m_1,\Tilde{\phi}^k,\Tilde{\psi}^k) + \frac{1}{2\tau} \|\rho- \rho^{(k)}\|^2_{L^2} + \frac{1}{2\tau} \|m - m^{(k)}\|^2_{L^2}+ \frac{1}{2\tau} \|a - a^{(k)}\|^2_{L^2} \\& + \frac{1}{2\tau} \|\rho_1 - \rho_1^{(k)}\|^2_{L^2}+ \frac{1}{2\tau} \|m_1 - m_1^{(k)}\|^2_{L^2};
    \end{align*}}
\State{ \begin{align*}
   & \left(\phi^{(k+ 1)},\psi^{(k+ 1)}\right) \\ & = \argmax\limits_{\phi,\psi}~~\mathcal{L}(\rho^{(k+1)},m^{(k+1)},a^{(k+1)},\rho_1^{(k+1)},m_1^{(k+1)},{\phi},{\psi})  - \frac{1}{2\sigma} \|\phi - \phi^{(k)}\|^2_{H}  - \frac{1}{2\sigma} \|\psi - \psi^{(k)}\|^2_{H};\\
% \end{align*}
%     }
%     \State{
    % \begin{align*}
       & \left(\Tilde{\phi}^{(k+1)} ,\Tilde{\psi}^{(k+1)} \right) = \left(2 \phi^{(k+1)} - \phi^{(k)},2 \psi^{(k+1)} - \psi^{(k)}\right);
    \end{align*}
    }
    \State{$k \gets k+1$;}
    \EndWhile 
\end{algorithmic}
\end{algorithm}

\subsection{Finite Difference Scheme of the control problem in the variational form.}
We consider the barotropic compressible Euler equation and discretize it using Lax--Friedrichs type of scheme. Consider the domain $[0,1]\times [0,1] $ in space-time. Given $N_{x},N_t>0$, we have $\Delta x = \frac{1}{N_{x}}$, $\Delta t = \frac{1}{N_{t}}$.
For $x_i = i \Delta x,t_l =l \Delta t $, define 
\begin{align*}
u_{i}^l &= u(t_l,x_i),\\
D_c(u)_{i} &= \frac{u_{i+1} -  u_{i-1}}{2 \Delta x},\quad Lap(u)_{i} = \frac{u_{i+1} - 2u_{i} + u_{i-1}}{(\Delta x)^2},\\
D(a(Du))_{i}^{l+1} & =  \frac{1}{\Delta x^2} \left(\frac{a_{i+1}^{l+1} +a_{i}^{l+1}}{2}\left(u_{i+1}^{l+1} - u_{i}^{l+1}\right) -\frac{a_{i}^{l+1} +a_{i-1}^{l+1}}{2}\left(u_{i}^{l+1} - u_{i-1}^{l+1}\right) \right)
\end{align*}

The barotropic compressible Euler equation adapted from the Lax--Friedrichs scheme is as follows:
	\begin{equation}\label{eq:density_discrete}
	\begin{aligned}
% 	&\frac{1}{\Delta t} \left( \rho_{i}^{l+1} - \rho_{i}^{l}\right) + 	\frac{1}{2\Delta x} \left( m_{i+1}^{l+1} - m_{i-1}^{l+1}\right)  -c\Delta x Lap(\rho)^{l+1}_i = 0,
	&\frac{1}{\Delta t} \left( \rho_{i}^{l+1} - \rho_{i}^{l}\right) + D_c(m)_i^{l+1}  -c\Delta x Lap(\rho)^{l+1}_i = 0,
		\end{aligned}
	\end{equation}
		\begin{equation}\label{eq:momentum_discrete}
	\begin{aligned}
	&		\frac{1}{\Delta t} \left( m_{i}^{l+1} - m_{i}^{l}\right) + D_c(\frac{m^2}{\rho})_{i}^{l+1} + D_c(P(\rho))_{i}^{l+1}  + D_c(\mu(\rho))_{i}^{l+1} a_{i}^{l+1} 
-\beta D\left(\mu(\rho) D (\frac{m}{\rho})\right)_{i}^{l+1}\\ &	-  c' \Delta x Lap(m)^{l+1}_i = 0,
	\end{aligned}
	\end{equation}
	for $1\leq i \leq N_{x}$, $0\leq l \leq N_t-1$. And $c, c'>0$ are artificial viscosity coefficients. We use the implicit scheme that fits the feedback structure of the optimal control problem. The discrete min-max problem is as follows:
\begin{align*}
\min_{\rho,m,a,\rho_1,m_1} \max_{\phi,\psi} L(\rho,m,a,\rho_1,m_1,\phi,\psi),
\end{align*}
where 
\begin{align*}
&   L(\rho,m,a,\rho_1,m_1,\phi,\psi) \\= & \Delta x \Delta t \sum_{\substack{1 \leq i \leq N_x\\1 \leq l \leq N_t}} |a_i^l|^2\mu(\rho_i^l)-\Delta t \sum_{ 1 \leq l \leq N_t} \mathcal{F}(\rho^l,m^l) + \Delta x \sum_{1 \leq i \leq N_x}\mathcal{H}( \rho_{i}^{N_t},m_{i}^{N_t}) \\
& + \Delta x \Delta t \sum_{\substack{1 \leq i \leq N_x\\ 0 \leq l \leq N_t-1}} \Bigg\{\phi_{i}^{l} \left(  \frac{1}{\Delta t} \left( \rho_{i}^{l+1} - \rho_{i}^{l}\right) + D_c(m)_i^{l+1}  -c\Delta x Lap(\rho)^{l+1}_i \right) \\&\hspace{3cm}+ \psi_{i}^{l}  \Bigg( \frac{1}{\Delta t} \left( m_{i}^{l+1} - m_{i}^{l}\right) + D_c(\frac{m^2}{\rho})_{i}^{l+1} + D_c(P(\rho))_{i}^{l+1}  \\
&\hspace{2.5cm}+ D_c(\mu(\rho))_{i}^{l+1}a_{i}^{l+1} 
-\beta D\left(\mu(\rho) D (\frac{m}{\rho})\right)_{i}^{l+1}- c' \Delta x Lap(m)^{l+1}_i \Bigg)\Bigg\}.
\end{align*}

Via the summation by parts and take first order variational derivative, we derive the implicit finite difference scheme for the dual equations of $\phi,\psi$.
\begin{equation}
    \begin{aligned}
    &	\frac{1}{\Delta t} \left( \phi_i^{l+1} - \phi_i^{l}\right) + \frac{1}{2}\left(D_c(\psi)_i^l\right)^2\mu'(\rho_i^l) + D_c(\psi)_i^l\left( P'(\rho_i^l)- \left(\frac{m_i^l}{\rho_i^l}\right)^2\right) \\
    &+ \mu'(\rho_i^l)\frac{\psi_i^l - \psi_{i-1}^l}{2 \Delta x} \left( \frac{m_{i}^{l+1}}{\rho_{i}^{l+1}} - \frac{m_{i-1}^{l+1}}{\rho_{i-1}^{l+1}}\right)  + \mu'(\rho_i^l)\frac{\psi_i^l - \psi_{i-1}^l}{2 \Delta x} \left( \frac{m_{i+1}^{l+1}}{\rho_{i+1}^{l+1}} - \frac{m_{i}^{l+1}}{\rho_{i}^{l+1}}\right) +\frac{\delta \mathcal{F}(\rho_i^l,m_i^l)}{\delta \rho}\\& =\beta \frac{m_i^l}{(\rho_i^l)^2} (c\Delta x) Lap(\phi)^{l}_i,
    \end{aligned}
\end{equation}
and 
\begin{equation}
    \begin{aligned}
    &	\frac{1}{\Delta t} \left( \psi_i^{l+1} - \psi_i^{l}\right) + 2 D_c(\psi)_i^l\frac{m_i^l}{\rho_i^l} + D_c(\phi)_i^l +\frac{\delta \mathcal{F}(\rho_i^l,m_i^l)}{\delta m}
 +\beta \frac{1}{\rho_i^l}D\left(\mu(\rho) D (\psi)\right)_{i}^{l}\\ &=  (c'\Delta x) Lap(\psi)^{l}_i.
    \end{aligned}
\end{equation}
\subsection{Numerical examples}
We provide three examples here to illustrate the proposed control problem. Without further specification, examples are considered in $[0,1] \times [0,1]$ in space-time domain. The spatial domain is imposed with periodic boundary condition. We have uniform mesh size in space and time, with $\Delta t = \frac{1}{N_t}, \Delta x = \frac{1}{N_x}$, $N_t = 32, N_x = 64$. We set the iteration number $\mathcal{K}_{maximal} = 5 \cdot 10^4$, and the stepsizes of $\tau,\sigma $ are tuned in each example.

\subsection{Example 1}
In the first example, we consider a degenerate case where there is essentially no control, i.e., $\mathcal{F} = 0, \mathcal{H} =0$. Solving this control problem is equivalent to solving an initial-value problem of BNS system. We set initial condition as follows with discontinuous piece-wise constant:
\begin{align*}
 \rho_0(x) =
    \begin{cases}
    2 \quad &\text{if}\; 0.25 < x < 0.75\\
    1 \quad &\text{else} \\
    \end{cases},
  \quad m_0(x) =
    \begin{cases}
    1 \quad &\text{if}\; 0.25 < x < 0.75\\
    0.5 \quad &\text{else} \\
    \end{cases}.   
\end{align*}
We consider this problem in $[0,1] \times [0,0.2]$ space-time domain, with mesh $N_x = 64, N_t = 16$.
To verify that our proposed model solves the initial-value problem of BNS system, we compare the result with a forward explicit finite difference scheme of the BNS system:
\begin{align*}
\begin{cases}
    	&\frac{\left( \rho_{i}^{l+1} - \rho_{i}^{l}\right)}{\Delta t}  + D_c(m)_i^{l}  -c\Delta x Lap(\rho)^{l}_i = 0,\\
    		&		\frac{\left( m_{i}^{l+1} - m_{i}^{l}\right)}{\Delta t}  + D_c(\frac{m^2}{\rho})_{i}^{l} + D_c(P(\rho))_{i}^{l}  
-\beta D\left(\mu(\rho) D (\frac{m}{\rho})\right)_{i}^{l}	-  c' \Delta x Lap(m)^{l}_i = 0.
\end{cases}
\end{align*}
The explicit scheme needs to satisfy the CFL condition, which leads to a very fine mesh in time. In this example, we set $N_x = 64, N_t = 256$.
The BNS system has $\mu(\rho)= 1, P(\rho) =0.1 \rho^2, \beta = 0.1, c=0.5, c'=0.5.$
\begin{figure}
    \centering
    \includegraphics[width=0.9\textwidth]{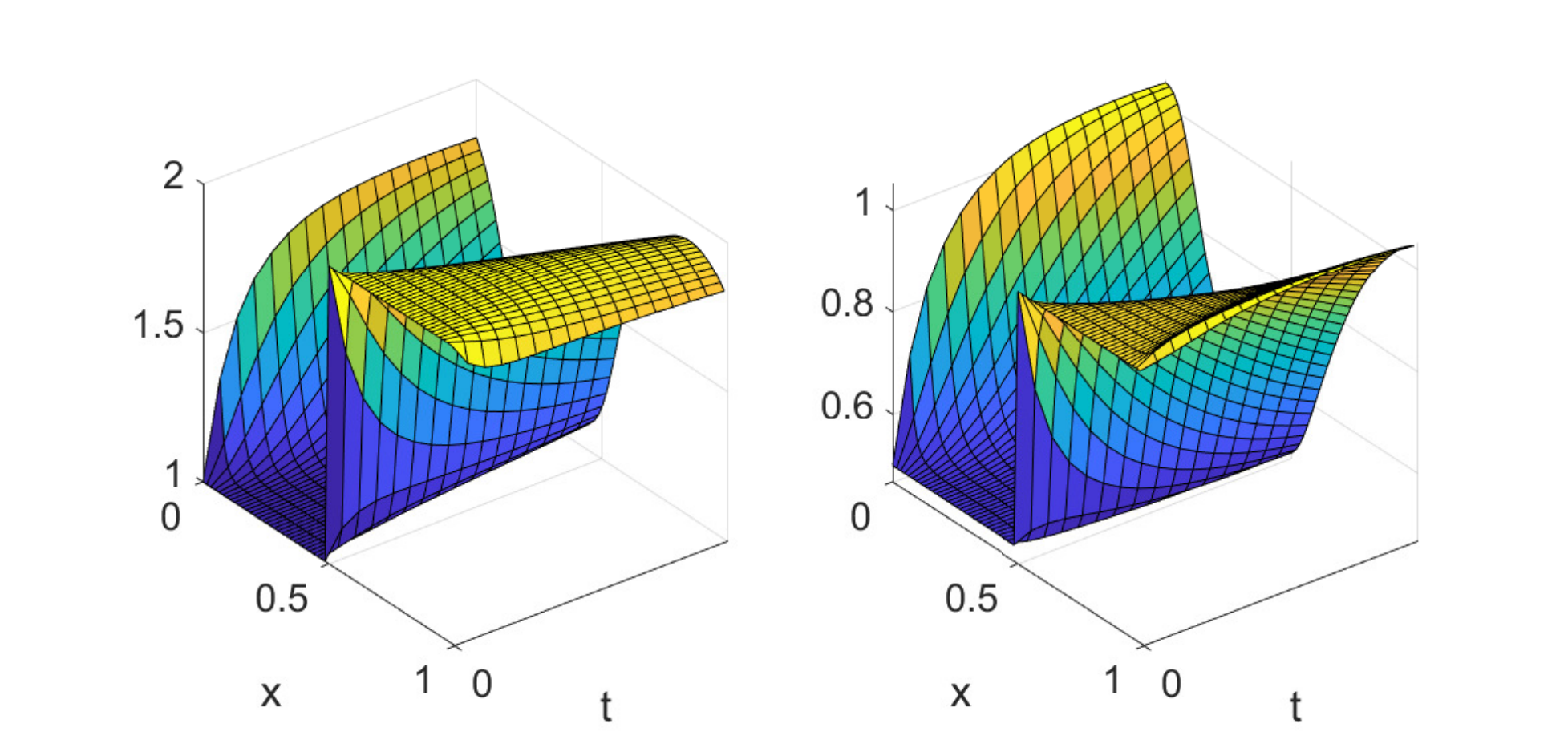}
    \caption{Solution to the BNS equation via control problem in example 1: $\rho(x,t)$ (left); $m(x,t)$ (right).}
    \label{fig:example0_2d}
    \centering
    \includegraphics[width=1\textwidth]{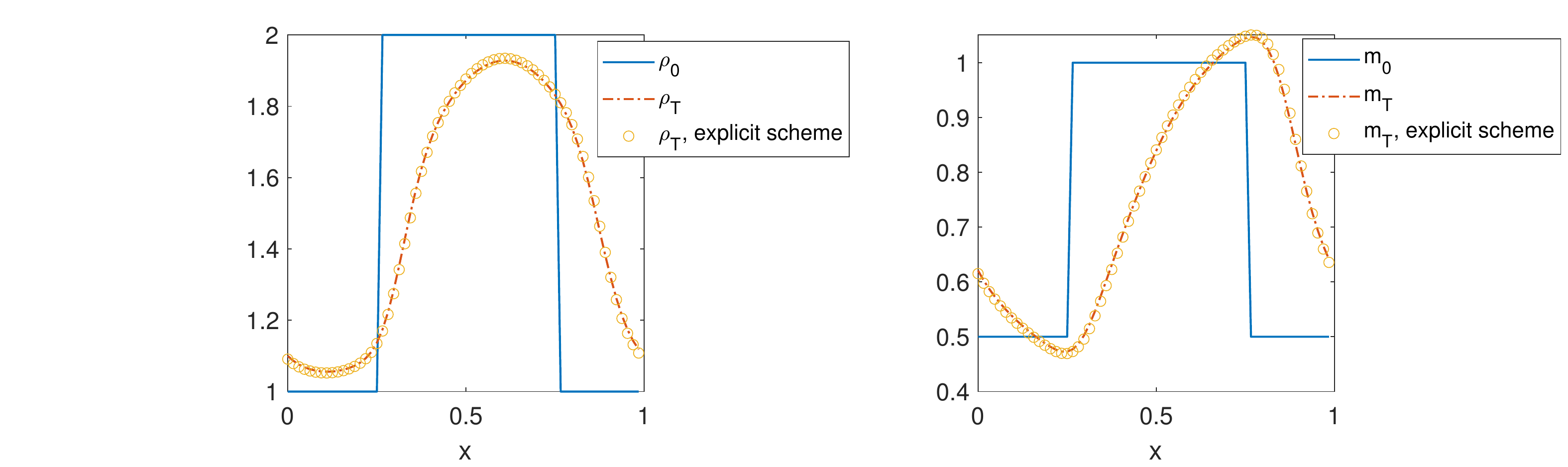}
    \caption{Initial condition of the BNS equation ($\rho_0, m_0$) in example 1 and the comparison of two solution via solving an optimal control problem ($\rho_T, m_T$) and using explicit scheme ($\rho_T, m_T$ explicit scheme) at final time $T= 0.2$.}
    \label{fig:example0_1d}
\end{figure}
The numerical results from Figure \ref{fig:example0_2d},\ref{fig:example0_1d} shows that our optimal control problem can successfully recover the initial value problem for the BNS system. Thanks to the implicit finite difference scheme, the optimal control problem allows larger step sizes in time. 
We expect that the computational complexity of our primal--dual approach will be lower than the explicit finite difference schemes as we refine the grid.

\subsubsection{Example 2} We consider a control problem of the BNS system where $\mu(\rho) = 1, P(\rho) =0.1 \rho^2,\beta = 0.1$. Numerical artificial viscosity $c = 0.5, c' = 0.5.$ The initial conditions for density and momentum are
\begin{align*}
    \rho_0(x) = 0.1 + 0.9 \exp(-100(x-0.5)^2), \quad m_0(x) = 0.
\end{align*}
 As for the control problem, we set $\mathcal{F} = 0, \mathcal{H}(\rho_1,m_1) = \int_{\Omega} \rho_1(x) g(x) dx$. We test two cases: $g_1(x) = 0, g_2(x) = -0.1\exp(-100(x-0.25)^2)$. In the first case, the optimal control problem will degenerate to the BNS equations without control; the solution $\rho,m$ will correspond to the original initial value problem. As for the second case, the final cost functional $\mathcal{H}$ we choose will make density concentrate around $x = 0.25$. 

\begin{figure}
    % \centering
    % \begin{minipage}{0.5\textwidth}
        \centering
 
        {\includegraphics[width=0.9\textwidth]{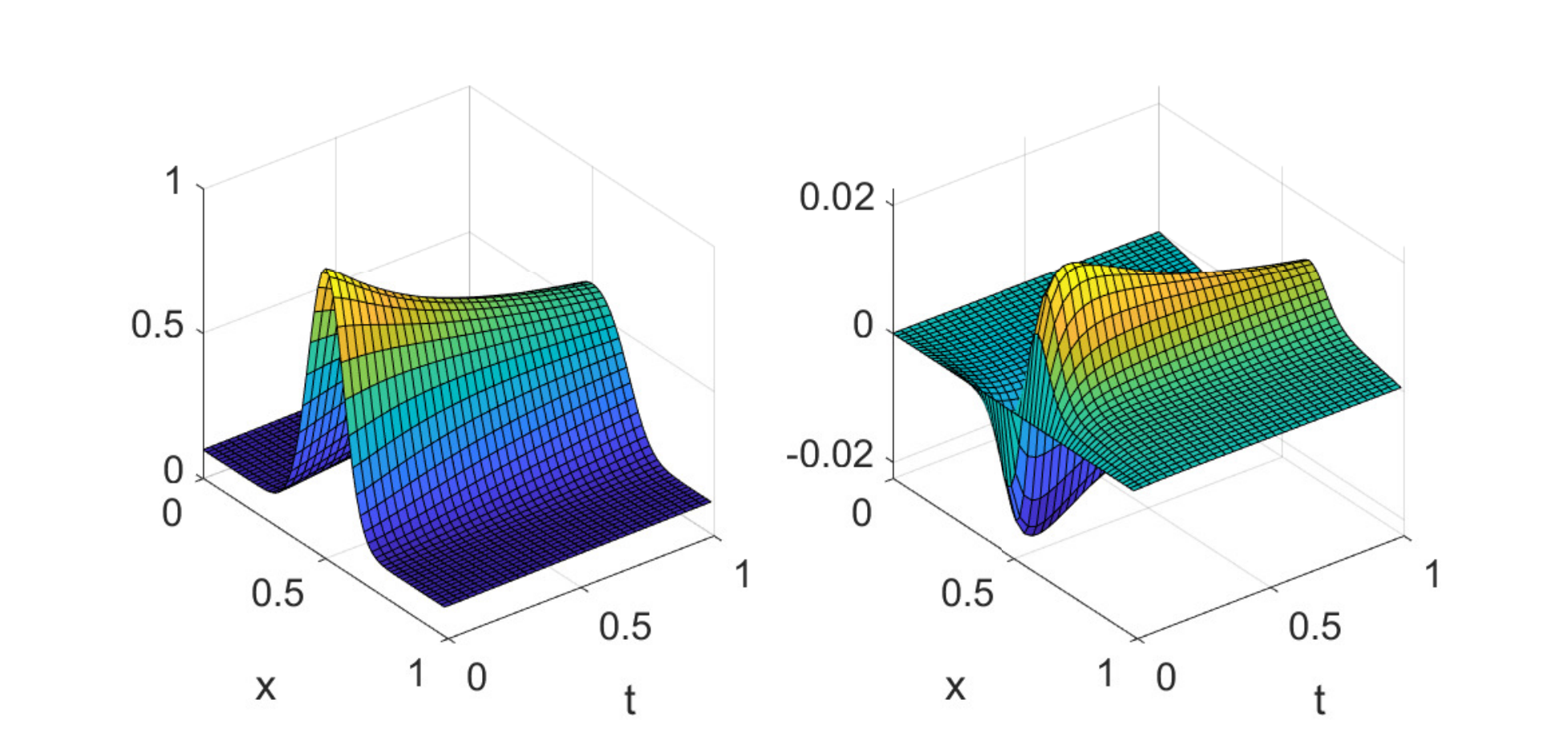} } % first figure itself
        \caption{Numerical results: the density (left) and the momentum (right) over time for case $g_1 = 0$ in example 2.}
               \label{fig:eg0_g1}
    % \end{minipage}
    \hfill
    % \begin{minipage}{0.5\textwidth}
        \centering

        \includegraphics[width=0.9\textwidth]{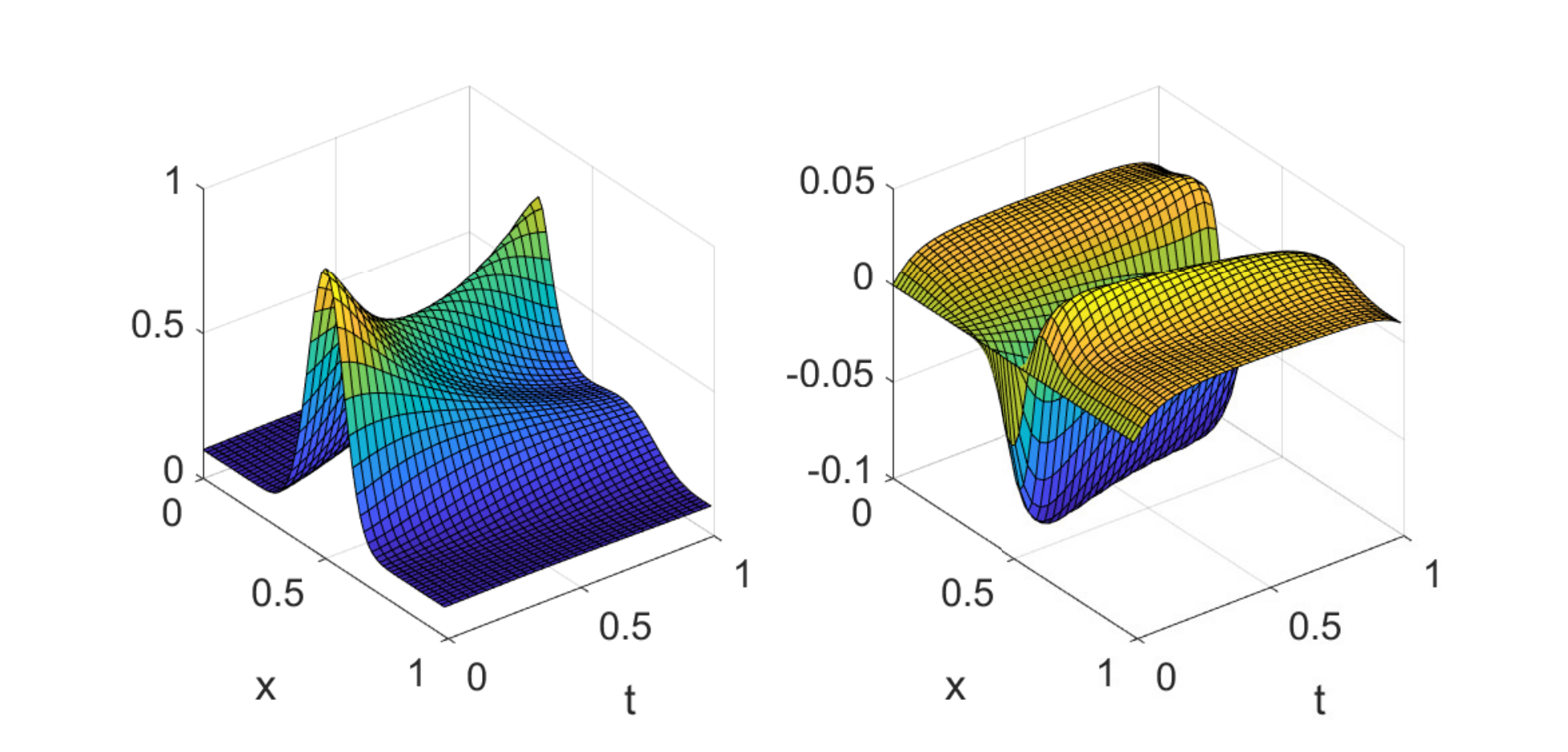} % second figure itself
        \caption{Numerical results: the density (left) and the momentum (right)  over time for case $g_2 = -0.1\exp(-100(x-0.25)^2)$ in example 2.}
                 \label{fig:eg0_g2}
    % \end{minipage}
\end{figure}
We can see from the numerical result in Figure \ref{fig:eg0_g1}, \ref{fig:eg0_g2} that with $\mathcal{H} = 0$, the density only diffuses in the first case; while in the second case a final cost functional is imposed at terminal time, the density moves towards $x = 0.25$ enforced by external control (from $a$). 

\subsubsection{Example 3}
 We consider a control problem of the BNS system where $\mu(\rho) = \rho, P(\rho) =0.1 \rho^2, \beta = 0.1$. Numerical artificial viscosity $c = 0.1, c' = 0.$ The initial conditions for density and momentum are
\begin{align*}
    \rho_0(x) = 1 + \exp(-100(x-0.5)^2), \quad m_0(x) = 0.
\end{align*}
We set $\mathcal{F}(\rho,m) = \int_{\Omega} c_F m^2 dx ,\; \mathcal{H}(\rho_1,m_1) = \int_{\Omega} \rho_1(x) g(x) dx$, where $g(x) = 0.1 \sin(4\pi x)$. Similarly to the first example, the final cost functional makes the density move towards $x=\frac{3}{8}, \frac{7}{8}$. The term $\mathcal{F}(\rho, m)$ penalize the control system with large momentum for $c_F>0$. 

\begin{figure}
        \centering
        {\includegraphics[width=0.9\textwidth]{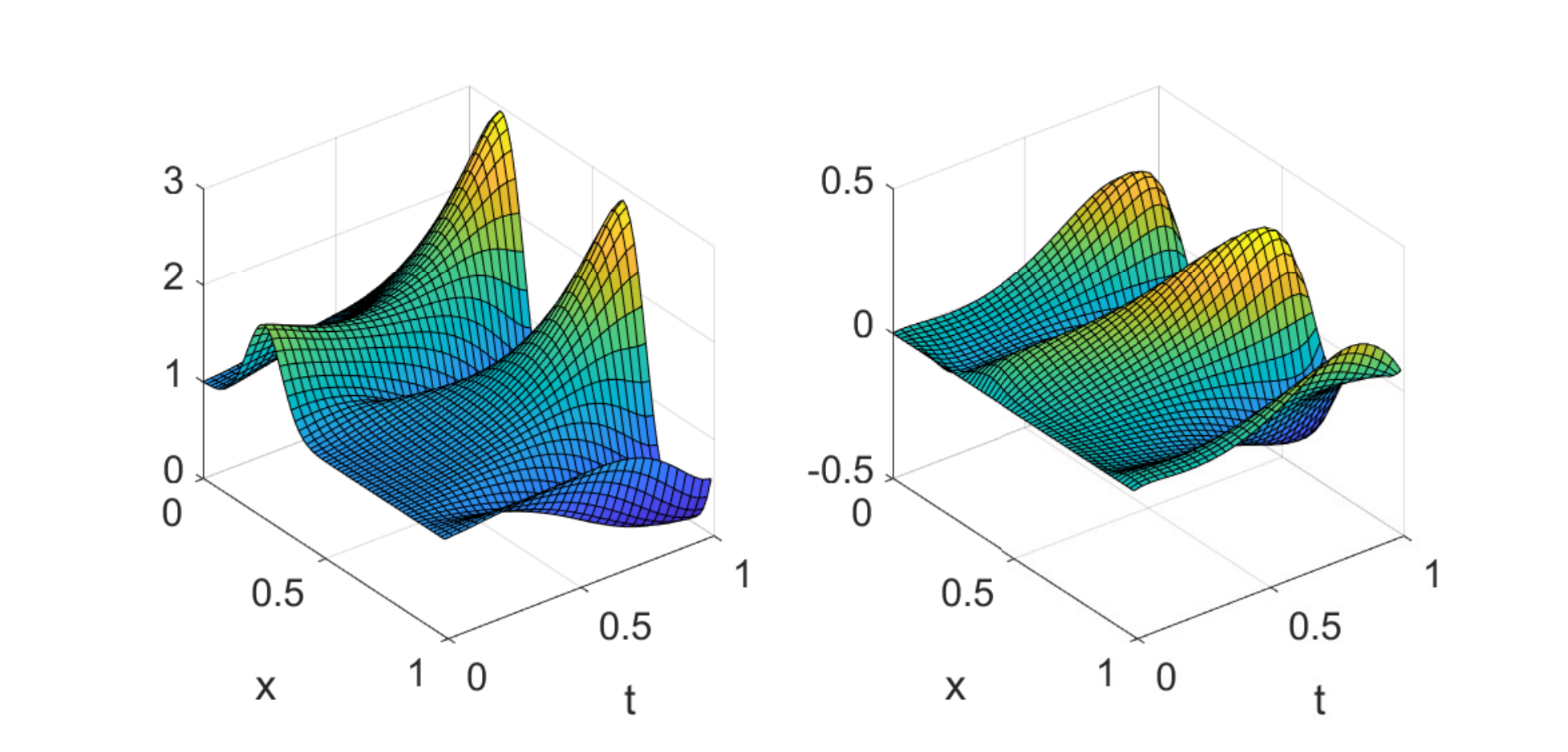} } % first figure itself
        \caption{The density (left) and the momentum (right) change over time for case $c_F = 0$ in example 3.}
               \label{fig:eg2a}
    % \end{minipage}
    \hfill
    % \begin{minipage}{0.5\textwidth}
        \centering

        \includegraphics[width=0.9\textwidth]{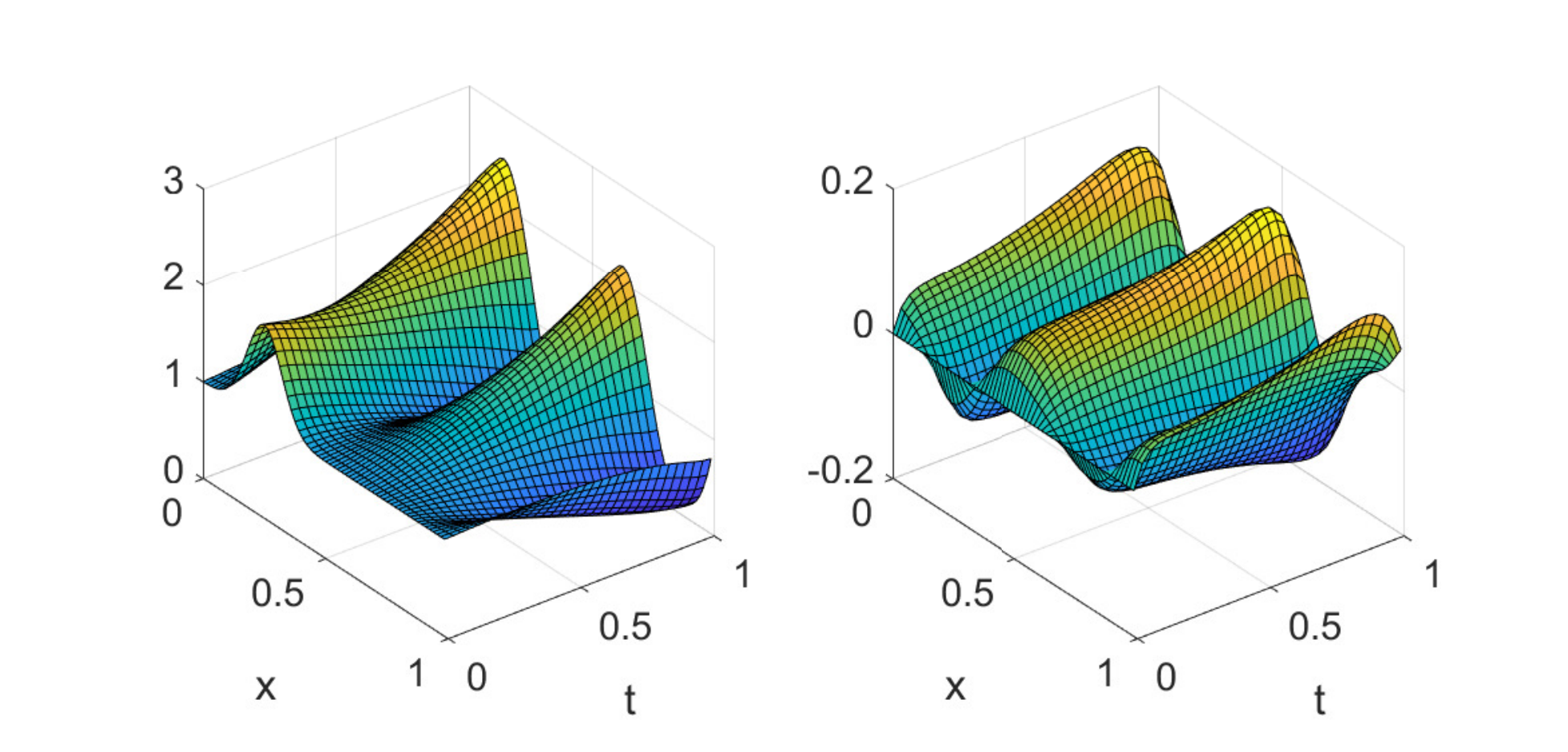} % second figure itself
        \caption{The density (left) and the momentum (right) change over time for case $c_F = 2$ in example 3.}
                 \label{fig:eg2b}
    % \end{minipage}
\end{figure}
Figure \ref{fig:eg2a}, \ref{fig:eg2b} present the density and momentum profile for the control problems. The density forms a similar shape both cases, with density concentrate more around $x=\frac{3}{8}, \frac{7}{8}$. As for the momentum, the momentum in the second case $c_F = 2$ has a smaller magnitude in terms of $\max_{x,t} m(x,t)$.

\bibliographystyle{abbrv}

\end{document}